\definecolor{cof}{RGB}{219,144,71}
\definecolor{pur}{RGB}{186,146,162}
\definecolor{greeo}{RGB}{91,173,69}
\definecolor{greet}{RGB}{52,111,72}
\newtheorem{theorem}{Theorem}[section]
\newtheorem{lemma}[theorem]{Lemma}
\newtheorem{proposition}[theorem]{Proposition}
\newtheorem{cor}[theorem]{Corollary}
\newtheorem{definition}[theorem]{Definition}
\theoremstyle{remark}
\newtheorem{remark}[theorem]{\bf{Remark}}
\numberwithin{equation}{section}
\begin{document}

\title [Berezin sectorial operators and Berezin number inequalties]{An introduction of Berezin sectorial operators and its application to Berezin number inequalities}

\author[S. Mahapatra, S. Mukherjee, A.Sen, R. Birbonshi and K. Paul]{Saikat Mahapatra, Sweta Mukherjee, Anirban Sen, Riddhick Birbonshi and Kallol Paul}
	
\address[Mahapatra]{Department of Mathematics, Jadavpur University, Kolkata 700032, West Bengal, India}
\email{smpatra.lal2@gmail.com}

\address[Mukherjee]{Department of Mathematics, Jadavpur University, Kolkata 700032, West Bengal, India}
\email{sweta.bankati@gmail.com}

\address[Sen] {Mathematical Institute, Silesian University in Opava, Na Rybn\'{\i}\v{c}ku 1, 74601 Opava, Czech Republic}
\email{anirbansenfulia@gmail.com; Anirban.Sen@math.slu.cz}

\address[Birbonshi] {Department of Mathematics, Jadavpur University, Kolkata 700032, West Bengal, India}
\email{riddhick.math@gmail.com}

\address[Paul] {Vice-Chancellor, Kalyani University, West Bengal 741235 and 
		Professor (on lien), Department of mathematics, Jadavpur University, Kolkata 700032, West Bengal, India}
	\email{kalloldada@gmail.com}


\subjclass[2020]{Primary: 47A30, 47B12, 47B32; Secondary: 47A05, 47B38}

\keywords{Berezin range, Berezin number, reproducing kernel Hilbert space, sectorial operator, inequalities.}

\maketitle	

\begin{abstract}  
  We introduce a new class of operators, called Berezin sectorial operators, which generalizes classical sectorial operators. We provide examples on the Hardy-Hilbert space showing that there exist operators that are Berezin sectorial but not sectorial and that the Berezin sectorial index can be strictly smaller than the classical one. We derive Berezin number inequalities for this class, including a weak version of the power inequality, and study geometric properties of the Berezin range for finite-rank and weighted shift operators on the Dirichlet space. We also raise the question of whether similar constructions are possible for composition-differentiation operators on the Dirichlet space.
\end{abstract}
\tableofcontents
\section{Introduction}
An operator is said to be sectorial if its numerical range lies within a cone in the right half of the complex plane with vertex at the origin. Sectorial operators have attracted significant attention due to their important role in the study of numerical radius inequalities. Over the years, many researchers have investigated these operators from various geometric perspectives and have developed several interesting bounds for the numerical radius that significantly refine existing results, see \cite{On sectorial matrices,Numerical radius,Drury,Norm of an,A geometric}.
The primary objective of this paper is to introduce Berezin sectorial operators, which includes sectorial operators as a special case. We provide illustrative examples of these operators in the Hardy-Hilbert space and establish Berezin number inequalities for this class which refines existing inequalities for general bounded linear operators on reproducing kernel Hilbert spaces.

Let $\mathscr{B(H)}$ denote the $C^*$-algebra of all bounded linear operators on a complex separable Hilbert space $\mathscr{H}$ with the usual inner product $\langle\cdot,\cdot\rangle$. Any $T\in \mathscr{B(H)}$, can be decomposed uniquely as $T=\Re(T)+i\Im(T)$, where $\Re(T)=\frac{1}{2}(T+T^*)$ and $\Im(T)=\frac{1}{2i}(T-T^*)$. This decomposition is known as the Cartesian decomposition. $T\in \mathscr{B(H)}$ is said to be positive operator if $\langle Tx,x\rangle > 0$ for all $x (\neq 0)\in \mathscr{H}$ and it is denoted by $T>0$. For $T\in \mathscr{B(H)}$, $|T|$ stands for the positive operator $(T^*T)^{\frac{1}{2}} $. The numerical range and the numerical radius of $T\in \mathscr{B(H)}$ are, respectively, denoted by $W(T)$ and $w(T)$ and are respectively defined as $W(T)=\{\langle Tx,x\rangle: x\in \mathscr{H}, \,\,\|x\|=1\}$ and $w(T)=\sup\{|\langle Tx,x\rangle|:x\in \mathscr{H}, \,\,\|x\|=1\}$.

 A reproducing kernel Hilbert space $ \mathscr{H}$ on a non-empty set $\mathscr{X}$ is a Hilbert space of all complex valued functions on $\mathscr{X}$ with the property that for every $\lambda \in \mathscr{X}$, the linear evaluation functional on $\mathscr{H}$ given by $\phi \to \phi(\lambda),$ is continuous (see \cite{PR_Book_2016}). The Riesz representation theorem ensures that for each $\lambda \in \mathscr{X}$, there exists a unique $ k_\lambda \in \mathscr{H}$ such that $\phi(\lambda)=\langle \phi, k_\lambda \rangle,$ for all $\phi \in  \mathscr{H}$. The collection of functions $\{k_\lambda :  \lambda \in \mathscr{X} \}$ is the set of all reproducing kernels of $ \mathscr{H}.$ The set of all normalized reproducing kernels of $ \mathscr{H}$ is the collection of functions $\{\hat{k}_{\lambda}=k_\lambda/\|k_\lambda\| :  \lambda\in \mathscr{X}\}.$

In this article, $\mathscr{H}$ stands for a reproducing kernel Hilbert space on the set $\mathscr{X}.$ For every $T \in  \mathscr{B}( \mathscr{H}),$ the Berezin transform of $T$, (see \cite{covariant,BER1}) is the function $\widetilde{T}$ on $\mathscr{X}$ defined by
$$\widetilde{T}(\lambda)=\langle T\hat{k}_{\lambda},\hat{k}_{\lambda} \rangle~~\text{for all $\lambda \in \mathscr{X}$}.$$
The Berezin range of $T$ is defined by
$$\textbf{Ber}(T):= \{\widetilde{T}(\lambda) : \lambda \in \mathscr{X}\},$$
see \cite{Reproducing}.
For $T \in  \mathscr{B}( \mathscr{H}),$ the quantities $\textbf{ber}(T)$ and $\|T\|_{ber},$ referred to as the Berezin number and the Berezin norm of 
$T$, respectively, are given by  
$$\textbf{ber}(T):=\sup \left\{|\widetilde{T}(\lambda)| : \lambda \in \mathscr{X} \right\}$$
and 
$$\|T\|_{ber}:=\sup\left\{|\langle T\hat{k}_{\lambda},\hat{\mu}_{y}\rangle | : \lambda,\mu \in \mathscr{X}\right\},$$
see \cite{BY_JIA_2020,Berezin symbol}. It is clear from the definition that for any $T\in \mathscr{B(H)}$ ,\[\textbf{Ber}(T)\subseteq W(T)\,\,\, \mbox{and}\,\,\, \textbf{ber}(T)\le w(T). \]
Unlike the numerical range, the Berezin range is not, in general, convex. The convexity of the Berezin range was first studied in \cite{convexity} and subsequently explored in several works \cite{Bulletin des,Composition,Reproducing,berezin toeplitz}. 
Moreover, the equality of the Berezin number and Berezin for positive operators is proved in \cite{pintuani}.
For recent developments in Berezin number inequalities, we refer to \cite{pintuRacsam,Barik,mahapatra,Majee}.

This article is organized as follows. Together with the introduction, it consists of five sections. In Section \ref{S3}, we introduce a new class of operators, called Berezin sectorial operators, which generalizes the classical notion of sectorial operators. This section also presents several examples in the Hardy–Hilbert space and explains the motivation for defining this new class. In Section \ref{S4}, we derive several Berezin number inequalities for Berezin sectorial operators, refining a number of known inequalities. Section \ref{S5} is devoted to power inequalities for the Berezin number, where we establish new inequalities for Berezin sectorial operators whose real and imaginary parts satisfy corresponding power inequalities, and also develop a weak version of these inequalities. Lastly, Section \ref{S6} outlines some fundamental characteristics of the Berezin range, such as convexity and symmetry about the real and imaginary axes, for finite-rank operators and weighted shift operators on the Dirichlet space. Since the Berezin range of these operators exhibits properties similar to those observed in the Hardy–Hilbert space, we conclude by posing the question of whether analogous constructions of Berezin sectorial operators can be achieved for certain composition–differentiation operators on the Dirichlet space.

\section{Berezin sectorial operators} \label{S3}

In this section, we begin by recalling the well-known definition of a sectorial operator as given in \cite{def}. 

\begin{definition}
   Let  $\theta\in[0,\frac{\pi}{2})$ and $S_\theta=\{z\in\mathbb{C}: \,\, |\arg z|\le \theta\}$
 be a sector in the complex plane $\mathbb{C}$, with the vertex at the origin and the semi-angle $\theta$. Then a linear operator $T$ in a
 Hilbert space $\mathscr{H}$ is called sectorial with vertex at $z = 0$ and the semi-angle $\theta$, if $W(T)\subseteq S_\theta$.
\end{definition}

Motivated by the definition of the sectorial operator, we define a new class of bounded linear operators, termed Berezin sectorial operators as:

\begin{definition}
   A linear operator $T$ in a
 reproducing kernel Hilbert space $\mathscr{H}$ is called Berezin sectorial with vertex at $z = 0$ and the semi-angle $\theta\in[0,\frac{\pi}{2})$, if $\textbf{Ber}(T)\subseteq S_\theta$, i.e., the  Berezin range of  $T$  lies entirely in a cone in the
 right half complex plane, with vertex at the origin and half
angle $\theta$. This class of operators will be denoted by $\Pi^{\textbf{Ber}}_{\theta}.$
\end{definition}
It follows directly from the definitions that every sectorial operator is a Berezin sectorial operator, making the latter a natural generalization of the former. However, the converse does not hold, not every Berezin sectorial operator is sectorial. To illustrate Berezin sectorial operators, we first consider composition-differentiation operators on Hardy-Hilbert spaces, for which it is necessary to estimate both the Berezin range and the numerical range.

Recall that, the Hardy-Hilbert space on the open unit disk $\mathbb{D}$ is denoted by $H^2(\mathbb D),$ and is defined as 
$$H^2(\mathbb D)=\left\{f : f(z)=\sum_{n=0}^{\infty}a_nz^n~~\text {with}~~ \sum_{n=0}^{\infty}|a_n|^2 < \infty\right\}.$$ 
$H^2(\mathbb D)$ is a reproducing kernel Hilbert space, where the kernel function (called Szeg\H{o} kernel) at $w \in \mathbb D$ is given by
$$k_w(z)=\frac{1}{1-\bar wz}~~\text{for all $z \in \mathbb D$}.$$
Furthermore, the kernel function for the point evaluations of the first derivative is given by
  $$k'_w(z)=\frac{d}{d\bar w}(\frac{1}{1-\bar wz})=\frac{z}{1-\bar wz}~~\text{for all $z \in \mathbb D$},$$ see \cite[Th. 2.16]{Cowen}.
For an analytic self-map $\phi$ on $\mathbb{D}$, the composition-differentiation operator $D_{\phi}$ on $H^2(\mathbb D)$ is defined by $D_{\phi}(f)=f'\circ\phi.$ 
It is well-known that if $\|\phi\|_{\infty}<1$ then $D_{\phi}$ is bounded on $H^2$ (see \cite{Ohno}) and for $\phi(z)=\rho z$ with $0<\rho<1$,
\[\|D_{\phi}\|=\Big\lfloor\frac{1}{1-\rho}\Big\rfloor \rho^{\big\lfloor\frac{1}{1-\rho}\big\rfloor-1},\] 
where $\lfloor\cdot\rfloor$ denotes the greatest integer function (See \cite[Th. 2]{Fatehi}).
Now, we determine the Berezin range of the operator $D_{\phi}$ acting on $H^2(\mathbb D),$ where  $\phi(z)=\rho z$ with $\rho\in(0,1).$

\begin{proposition}
    If $\phi(z)=\rho z$ with $\rho\in(0,1)$ then $$\textbf{Ber}(D_{\phi})=\overline{B(0,r_1(\rho))},$$ where 
    $$r_1(\rho)=\frac{(3-\sqrt{9\rho^2-14\rho+9}-\rho)\sqrt{6\rho+2\sqrt{9\rho^2-14\rho+9}-6}}{\sqrt{\rho}(3\rho+\sqrt{9\rho^2-14\rho+9}-5)^2}.$$
\end{proposition}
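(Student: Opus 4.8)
The plan is to compute the Berezin symbol $\widetilde{D_\phi}$ explicitly and then read off its image. First I would compute $D_\phi k_\lambda$ by expanding the Szeg\H{o} kernel as $k_\lambda(z)=\sum_{n\ge 0}\overline{\lambda}^{\,n}z^n$ and applying $D_\phi$ termwise: $(D_\phi k_\lambda)(z)=(k_\lambda'\circ\phi)(z)=\sum_{n\ge 1} n\overline{\lambda}^{\,n}\rho^{\,n-1}z^{n-1}$ (one may also read this off from the identity $D_\phi^* k_w=k'_{\phi(w)}$, which follows from $\langle D_\phi f,k_w\rangle=f'(\phi(w))$). Evaluating at $z=\lambda$ and summing, $(D_\phi k_\lambda)(\lambda)=\overline{\lambda}\sum_{m\ge 0}(m+1)(\rho|\lambda|^2)^m=\dfrac{\overline{\lambda}}{(1-\rho|\lambda|^2)^2}$, and dividing by $\|k_\lambda\|^2=(1-|\lambda|^2)^{-1}$ gives
$$\widetilde{D_\phi}(\lambda)=\langle D_\phi\widehat{k}_\lambda,\widehat{k}_\lambda\rangle=\frac{\overline{\lambda}\,(1-|\lambda|^2)}{(1-\rho|\lambda|^2)^2},\qquad \lambda\in\mathbb{D}.$$

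Next I would describe the range. Writing $\lambda=re^{i\theta}$ with $r\in[0,1)$ and $\theta\in[0,2\pi)$, we have $\widetilde{D_\phi}(\lambda)=g(r)e^{-i\theta}$ with $g(r)=\dfrac{r(1-r^2)}{(1-\rho r^2)^2}\ge 0$. For each fixed $r$, letting $\theta$ vary makes $\widetilde{D_\phi}(\lambda)$ sweep the whole circle of radius $g(r)$ about the origin, so $\textbf{Ber}(D_\phi)=\{z\in\mathbb{C}:\ |z|\in g([0,1))\}$. Since $g$ is continuous on $[0,1)$ with $g(0)=0$, $g>0$ on $(0,1)$, and $g(r)\to 0$ as $r\to 1^-$, the image $g([0,1))$ is precisely the interval $[0,M]$ where $M=\max_{r\in[0,1)}g(r)$ is attained at an interior point. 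Hence $\textbf{Ber}(D_\phi)=\overline{B(0,M)}$, and it remains to compute $M$ and check $M=r_1(\rho)$.

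For the optimisation I would differentiate $g$ (clearing the common factor $(1-\rho r^2)$ from $g'$, or differentiating $g^2$): the stationary points in $(0,1)$ satisfy $1+3(\rho-1)r^2-\rho r^4=0$, i.e.\ $\rho u^2-3(\rho-1)u-1=0$ with $u=r^2$. The product of the two roots is $-1/\rho<0$, so there is a unique positive root
$$u_0=\frac{3\rho-3+\sqrt{9\rho^2-14\rho+9}}{2\rho},$$
and squaring the elementary inequalities $3(1-\rho)<\sqrt{9\rho^2-14\rho+9}<3-\rho$ shows $u_0\in(0,1)$. As $g$ is increasing on $[0,\sqrt{u_0}]$ and decreasing on $[\sqrt{u_0},1)$, we get $M=g(\sqrt{u_0})$; substituting $r^2=u_0$ and using $1-u_0=\frac{3-\rho-D}{2\rho}$ and $1-\rho u_0=\frac{5-3\rho-D}{2}$, where $D:=\sqrt{9\rho^2-14\rho+9}$, the nested radicals collapse to the stated expression for $r_1(\rho)$.

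The only genuinely delicate part is this last simplification — coaxing $g(\sqrt{u_0})$ into the claimed closed form — together with the sign and bound bookkeeping needed to confirm $u_0\in(0,1)$ and that it yields a maximum (which is exactly what makes the Berezin range the \emph{closed} disk, boundary included). Everything else — the termwise evaluation of $D_\phi k_\lambda$, the summation of the series, and the derivative of $g$ — is routine.
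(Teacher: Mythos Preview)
Your approach is exactly the paper's: compute the Berezin symbol in polar form, let the angle sweep out full circles, and maximize the radial factor on $[0,1)$. The paper simply asserts the value of the maximum without displaying the quadratic in $u=r^2$, so your derivation of $u_0$ and the check that $u_0\in(0,1)$ are more explicit than what appears there.

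There is, however, a genuine problem at the last step. Your Berezin symbol
\[
\widetilde{D_\phi}(\lambda)=\frac{\overline{\lambda}\,(1-|\lambda|^2)}{(1-\rho|\lambda|^2)^2}
\]
is correct, but it differs from the paper's expression $(1-|z|^2)\,k'_z(\rho z)=(1-r^2)\dfrac{\rho r}{(1-\rho r^2)^2}e^{i\zeta}$ by a factor of $\rho$ in the modulus; the paper has conflated the derivative $(k_\lambda)'(w)=\overline{\lambda}/(1-\overline{\lambda}w)^2$ with the derivative-evaluation kernel $k'_\lambda(w)=w/(1-\overline{\lambda}w)^2$. Consequently your radial function is $g(r)=r(1-r^2)/(1-\rho r^2)^2$, not $\rho\,g(r)$, and carrying out the substitution with $D=\sqrt{9\rho^2-14\rho+9}$, $1-u_0=\frac{3-\rho-D}{2\rho}$, $1-\rho u_0=\frac{5-3\rho-D}{2}$, $\sqrt{u_0}=\frac{\sqrt{6\rho+2D-6}}{2\sqrt{\rho}}$ gives
\[
g(\sqrt{u_0})=\frac{(3-\rho-D)\sqrt{6\rho+2D-6}}{\rho^{3/2}(5-3\rho-D)^2}=\frac{r_1(\rho)}{\rho},
\]
not $r_1(\rho)$ (for instance at $\rho=\tfrac12$ one gets $M\approx 0.635$ while $r_1(\tfrac12)\approx 0.318$). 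So the sentence ``the nested radicals collapse to the stated expression for $r_1(\rho)$'' is precisely where your argument fails: the stated $r_1(\rho)$ matches the paper's radial function, which carries the spurious factor $\rho$. Everything preceding that line is sound.
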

 \begin{proof}
     Berezin symbol of $D_{\phi}$ at $z=re^{i\zeta} \in \mathbb{D}$ is given by
     \begin{align*}
         \widetilde{D_{\phi}}(z)=\frac{ 1}{\|k_z\|^2}\langle D_{\phi}{k_z},{k_z}\rangle=(1-|z|^2)k'_z(\rho z)=
         (1-r^2)\frac{\rho r}{(1-\rho r^2)^2}e^{i\zeta}.
     \end{align*}
     Consider the function $f:[0,1]\rightarrow \mathbb{R}^+$ given by $f(r)=(1-r^2)\frac{\rho r}{(1-\rho r^2)^2}.$ Clearly, $f$ is continuous and becomes zero at both of the endpoints. So, $f$ attains its maximum value in $(0,1)$. 
     The maximum value of $f$ is given by $$\max_{r \in(0,1)}f(r)=\frac{(3-\sqrt{9\rho^2-14\rho+9}-\rho)\sqrt{6\rho+2\sqrt{9\rho^2-14\rho+9}-6}}{\sqrt{\rho}(3\rho+\sqrt{9\rho^2-14\rho+9}-5)^2}=r_1(\rho).$$\\
     The continuity of $f$ implies that $\textbf{Ber}(D_{\phi})=\overline{B(0,r_1(\rho))}.$
     
 \end{proof}  
 Next we find the numerical range of $D_{\phi}$, where  $\phi(z)=\rho z$ with $\rho\in(0,1).$
 For this we first note that the matrix representation of $D_\phi$ corresponding to the orthonormal basis $\{e_n\}_{n=0}^{\infty}$ where $e_n(z)=z^n,$ is given by
    \begin{align*}
      D_\phi= \begin{pmatrix}
           0&0&0&.&.&.&.\\
           1&0&0&.&.&.&.\\
           0&2\rho&0&.&.&.&.\\
           0&0&3\rho^2&.&.&.&.\\
           0&0&0&4\rho^3&.&.&.\\
           .&.&.&.&.&.&.\\
           .&.&.&.&.&.&.\\
           .&.&.&.&.&.&.
       \end{pmatrix}.
       \end{align*}
       Clearly, here $D_\phi$ is a bounded weighted shift operator. The numerical range of this weighted shift operator $D_{\phi}$ is unitarily equivalent to $e^{i\theta}D_\phi$ for all $\theta \in \mathbb R,$ hence numerical range of $D_{\phi}$ is a circular disk with centre at the origin (see \cite{Stout}). Also, we have the sequence of weights of this $D_\phi$ is bounded and converges to 0. So, $D_{\phi}$ is compact, and also $0\in W(D_{\phi})$. Therefore, the numerical range of $D_{\phi}$ is a closed circular disc with centre at the origin (\cite[Cor 1]{Lancaster}). 
       Thus, this observation characterizes the shape of the numerical range of $D_{\phi},$ but it does not provide any information about its size. Since determining the exact numerical radius is highly nontrivial, we instead establish lower and upper bounds for it.
       
       It is well-known that $\frac{\|T\|}{2} \leq w(T)$ (see \cite{Gustafson}). Consequently, this yields the following lower bound of $w(D_{\phi}),$ namely,
       \begin{eqnarray}\label{iifff}
             \frac{1}{2}\left\lfloor\frac{1}{1-\rho}\right\rfloor\rho^{\left\lfloor\frac{1}{1-\rho}\right\rfloor-1} \leq w(D_{\phi}).
       \end{eqnarray} 

       To obtain an upper bound, we recall a known estimate involving the Aluthge transformation, which will be used in our next result. Let $T$ be a bounded linear operator on a Hilbert space, and let $T=U|T|$ be the polar decomposition, where  $|T|=(T^*T)^\frac{1}{2}$ and $U$ is partial isometry. The Aluthge transformation $\widetilde{T}$ of $T$ is defined by $$\widetilde{T}=|T|^\frac{1}{2}U|T|^\frac{1}{2}.$$ It is proved in \cite[Th. 2.1]{Yamazaki} that the upper bound for the numerical radius of a bounded linear operator can be obtained via the numerical radius of its Aluthge transformation. In particular, the following inequality holds:
       \begin{eqnarray}\label{iiff}
             w(T)\le \frac12 (\|T\|+w(\widetilde{T})).
       \end{eqnarray}
       
  Now, we are in a position to prove the following inclusion.
 \begin{proposition}
         If $\phi(z)=\rho z$ with $\rho\in(0,1)$ then $$\overline{B(0,r_2(\rho))}\subseteq W(D_\phi)\subseteq\overline{B(0,r_3(\rho))},$$ where 
         $$r_2(\rho)= \frac{1}{2}\left\lfloor\frac{1}{1-\rho}\right\rfloor\rho^{\left\lfloor\frac{1}{1-\rho}\right\rfloor-1}$$ and $$r_3(\rho)=  \frac{1}{2}\left(\left\lfloor\frac{1}{1-\rho}\right\rfloor\rho^{\left\lfloor\frac{1}{1-\rho}\right\rfloor-1}+\sqrt{\left\lfloor\frac{1+\rho^2}{1-\rho^2}\right\rfloor}\rho^{\left\lfloor\frac{1+\rho^2}{1-\rho^2}\right\rfloor-\frac{1}{2}}\right).$$ 
 \end{proposition}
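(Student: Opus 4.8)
The statement is purely a claim about the numerical radius $w(D_\phi)$: since it was already established that $W(D_\phi)=\overline{B(0,w(D_\phi))}$ is a closed disc centred at the origin, the two asserted inclusions are equivalent to the pair of inequalities $r_2(\rho)\le w(D_\phi)\le r_3(\rho)$, and the plan is simply to prove these two.

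The lower inequality is essentially free. Combining the Gustafson bound $\tfrac12\|D_\phi\|\le w(D_\phi)$ with Fatehi's norm formula $\|D_\phi\|=\big\lfloor\tfrac{1}{1-\rho}\big\rfloor\rho^{\lfloor\frac{1}{1-\rho}\rfloor-1}$ — this is exactly inequality \eqref{iifff} above — gives $r_2(\rho)=\tfrac12\|D_\phi\|\le w(D_\phi)$, hence $\overline{B(0,r_2(\rho))}\subseteq\overline{B(0,w(D_\phi))}=W(D_\phi)$.

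For the upper inequality the plan is to feed in the Aluthge-transform estimate \eqref{iiff}, $w(D_\phi)\le\tfrac12\big(\|D_\phi\|+w(\widetilde{D_\phi})\big)$, and to bound $w(\widetilde{D_\phi})$. The matrix displayed above shows that $D_\phi$ is the unilateral weighted shift with weights $\gamma_n=(n+1)\rho^n$; hence $|D_\phi|=\operatorname{diag}(\gamma_n)$, the partial isometry in the polar decomposition of $D_\phi$ is the unweighted forward shift, and therefore $\widetilde{D_\phi}=|D_\phi|^{1/2}U|D_\phi|^{1/2}$ is again a weighted shift, now with weights $\beta_n=\sqrt{\gamma_n\gamma_{n+1}}=\sqrt{(n+1)(n+2)}\,\rho^{\,n+\frac12}$. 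Once the bound $w(\widetilde{D_\phi})\le\sqrt{\lfloor\frac{1+\rho^2}{1-\rho^2}\rfloor}\,\rho^{\lfloor\frac{1+\rho^2}{1-\rho^2}\rfloor-\frac12}$ is available, substituting it into \eqref{iiff} gives $w(D_\phi)\le r_3(\rho)$, and then $W(D_\phi)=\overline{B(0,w(D_\phi))}\subseteq\overline{B(0,r_3(\rho))}$ as required.

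The delicate point — the one I expect to be the main obstacle — is precisely this estimate for $w(\widetilde{D_\phi})$, which is sharper than the crude bound $w(\widetilde{D_\phi})\le\|\widetilde{D_\phi}\|=\sup_n\beta_n$. I would obtain it from the fact that for a weighted shift with nonnegative weights the numerical radius equals the operator norm of its self-adjoint real part; here $\Re(\widetilde{D_\phi})=|D_\phi|^{1/2}\,\Re(U)\,|D_\phi|^{1/2}$ is a tridiagonal (Jacobi) operator with off-diagonal entries $\tfrac12\beta_n$, whose norm can be controlled by a weighted Schur test, i.e.\ by exhibiting a positive sequence $(\pi_n)$ with $\tfrac12\beta_{n-1}\pi_{n-1}+\tfrac12\beta_n\pi_{n+1}\le M\pi_n$ for all $n$, $M$ being the claimed bound. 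The place where $\beta_n$ is maximal governs the value of $M$: the ratio test $\beta_{n+1}^2/\beta_n^2=\tfrac{n+3}{n+1}\rho^2$ shows $\beta_n$ increases precisely while $n\le\tfrac{3\rho^2-1}{1-\rho^2}$, so the maximizing index is $\big\lfloor\tfrac{2\rho^2}{1-\rho^2}\big\rfloor=\big\lfloor\tfrac{1+\rho^2}{1-\rho^2}\big\rfloor-1$, and it is this integer that produces the floor expressions in $r_3(\rho)$. Verifying the Schur-test inequality for a workable choice of $(\pi_n)$ and finishing the discrete optimization are the technical core; everything else is bookkeeping, and the disc structure of $W(D_\phi)$ upgrades the radius inequalities to the stated set inclusions.
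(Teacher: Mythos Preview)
Your strategy coincides with the paper's: the lower inclusion is exactly inequality \eqref{iifff}, and the upper inclusion comes from Yamazaki's bound \eqref{iiff} once $w(\widetilde{D_\phi})$ is controlled; you also correctly identify $\widetilde{D_\phi}$ as a weighted shift with weights $\alpha_n=\sqrt{n(n+1)}\,\rho^{\,n-1/2}$ (paper's indexing). Where you diverge is in the final estimate. The paper does \emph{not} run a Schur test on $\Re(\widetilde{D_\phi})$: it simply uses the crude bound $w(\widetilde{D_\phi})\le\|\widetilde{D_\phi}\|=\sup_n\alpha_n$, locates the maximising index at $N=\big\lfloor\tfrac{1+\rho^2}{1-\rho^2}\big\rfloor$, and obtains $\|\widetilde{D_\phi}\|=\sqrt{N(N+1)}\,\rho^{\,N-1/2}$.

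You are right that this is strictly larger than the $\sqrt{N}\,\rho^{\,N-1/2}$ appearing in the stated $r_3(\rho)$. That discrepancy is present in the paper itself: the factor $\sqrt{N+1}$ drops out, without justification, between the displayed formula for $\|\widetilde{D_\phi}\|$ and the final inequality \eqref{iifffff}. In other words, the ``delicate point'' you flag is not something the paper resolves --- it is almost certainly a typo either in the statement of $r_3(\rho)$ or in the last displayed line of the proof. Your Schur-test programme is therefore aimed at a bound the paper never establishes, and you have not exhibited a weight sequence $(\pi_n)$ that makes the test succeed (the obvious geometric choices do not appear to give anything this sharp). To reproduce the paper's actual argument, drop the Schur test, use $w(\widetilde{D_\phi})\le\|\widetilde{D_\phi}\|$, and accept the slightly larger upper radius with $\sqrt{N(N+1)}$ in place of $\sqrt{N}$.
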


\begin{proof}
 The Aluthge transformation of $D_\phi$ is 
 $\widetilde{D_\phi}=|D_\phi|^\frac{1}{2}U|D_\phi|^\frac{1}{2},$ where
           %
\begin{align*}
     U=\begin{pmatrix}
            0&0&0&.&.&.&.\\
            1&0&0&.&.&.&.\\
            0&1&0&.&.&.&.\\
            .&.&.&.&.&.\\
            .&.&.&.&.&.\\
        \end{pmatrix}
\end{align*}
    and \begin{align*}
        |D_\phi|=(D_\phi^*D_\phi)^\frac{1}{2}=\begin{pmatrix}
            {w_1}&0&0&.&.&.\\
            0&{w_2}&0&.&.&.\\
            0&0&{w_3}&.&.&.\\
            .&.&.&.&.&.\\
            .&.&.&.&.&.\\
            .&.&.&.&.&.\\
        \end{pmatrix},      
        \end{align*} with $w_n=n\rho^{n-1}$, $n=1,2,\cdots$.
Therefore, 
\begin{align*}
    \widetilde{D_\phi}&=\begin{pmatrix}
            \sqrt{w_1}&0&0&.&.&.\\
            0&\sqrt{w_2}&0&.&.&.\\
            0&0&\sqrt{w_3}&.&.&.\\
            .&.&.&.&.&.\\
            .&.&.&.&.&.\\
            .&.&.&.&.&.\\
    \end{pmatrix}
    \begin{pmatrix}
        0&0&0&.&.&.&.\\
            1&0&0&.&.&.&.\\
            0&1&0&.&.&.&.\\
            .&.&.&.&.&.\\
            .&.&.&.&.&.\\
    \end{pmatrix}
    \begin{pmatrix}
            \sqrt{w_1}&0&0&.&.&.\\
            0&\sqrt{w_2}&0&.&.&.\\
            0&0&\sqrt{w_3}&.&.&.\\
            .&.&.&.&.&.\\
            .&.&.&.&.&.\\
            .&.&.&.&.&.\\
    \end{pmatrix}\\&=
    \begin{pmatrix}
        \sqrt{w_1w_2}&0&0&.&.&.\\
            0&\sqrt{w_2w_3}&0&.&.&.\\
            0&0&\sqrt{w_3w_4}&.&.&.\\
            .&.&.&.&.&.\\
            .&.&.&.&.&.\\
            .&.&.&.&.&.\\
    \end{pmatrix}\\&=
    \begin{pmatrix}
        {\alpha_1}&0&0&.&.&.\\
            0&{\alpha_2}&0&.&.&.\\
            0&0&{\alpha_3}&.&.&.\\
            .&.&.&.&.&.\\
            .&.&.&.&.&.\\
            .&.&.&.&.&.\\
    \end{pmatrix}, \,\,\alpha_n=\sqrt{w_nw_{n+1}}=\sqrt{n(n+1)}\rho^{n-\frac{1}{2}}.
\end{align*}
Since, $\|\widetilde{D_\phi}\|=\sup\{\alpha_n:n\in\mathbb{N}\}$ so now we calculate the supremum of the sequence $\{\alpha_n\}$.
Note that the function $f:(1,\infty)\rightarrow \mathbb{R}^+$ given by  $f(x)=x(x+1)\rho^{2x-1}$ has exactly one critical point in $(1, \infty)$ and the point is a local maximum. The maximum will be attained at the greatest natural number $n\geq 2$ such that
\[n(n-1)\rho^{2n-3} \leq n(n+1)\rho^{2n-1},\]
equivalently,
\[n \leq \left\lfloor\frac{1+\rho^2}{1-\rho^2}\right\rfloor.\]
Thus, we have
$$\|\widetilde{D_\phi}\|=\sqrt{\left\lfloor\frac{1+\rho^2}{1-\rho^2}\right\rfloor\left(\left\lfloor\frac{1+\rho^2}{1-\rho^2}\right\rfloor+1\right)}\rho^{\left\lfloor\frac{1+\rho^2}{1-\rho^2}\right\rfloor-\frac{1}{2}}.$$
Now, from the inequality \eqref{iiff}, we obtain
\begin{eqnarray}\label{iifffff}
 w(D_\phi)\le\frac{1}{2}\left(\left\lfloor\frac{1}{1-\rho}\right\rfloor\rho^{\left\lfloor\frac{1}{1-\rho}\right\rfloor-1}+\sqrt{\left\lfloor\frac{1+\rho^2}{1-\rho^2}\right\rfloor}\rho^{\lfloor\frac{1+\rho^2}{1-\rho^2}\rfloor-\frac{1}{2}}\right).
   \end{eqnarray} 
Therefore, the result follows from the previous arguments and the bound given by \eqref{iifffff}.

\end{proof}

\noindent
In the following figures:
\begin{itemize}
\item The circle, shown in red $(\textcolor{red}{\rule{0.5cm}{2pt}})$, represents the boundary of the disk containing the numerical range.
\item The circle, shown in green $(\textcolor{green!65!black}{\rule{0.5cm}{2pt}})$, represents the boundary of the disk contained in the numerical range.
\item The Berezin range is indicated by blue ({\huge{\textcolor{blue}{$\bullet$}}}) disk.
\end{itemize}

For the composition-differentiation operator $D_{\phi}$ with symbol $\phi(z)=\frac12z,$ Figure 1 represents the Berezin range, the disk contained in the numerical range, and the disk containing the numerical range.

\begin{figure}[h]
\centering
\includegraphics[width=8cm]{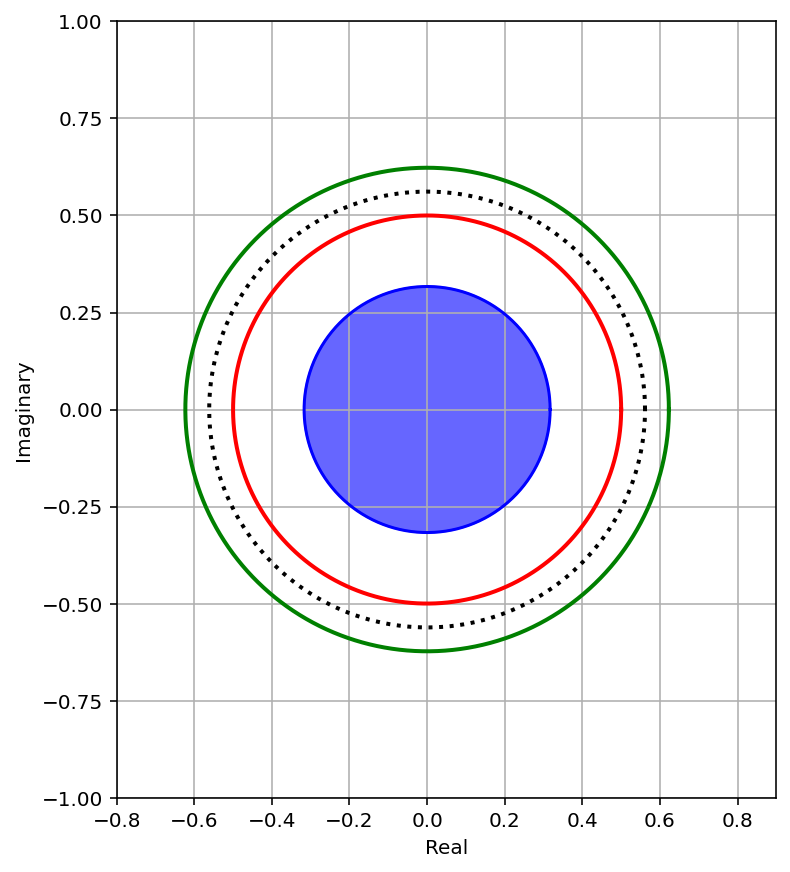}
\caption{}
\label{fig:sample_plot}
\end{figure}

Figure 2 represents the same for the operator $D_{\phi}+0.41I,$ where $\phi(z)=\frac12z.$ Clearly, this operator is not sectorial. However $\textbf{Ber}(D_{\phi}+0.41I)\subseteq S_{\frac{\pi}{3.55}}$ and therefore it is Berezin sectorial with index $\frac{\pi}{3.55}.$

\newpage
 
\begin{figure}[h]
\centering
\includegraphics[width=8cm]{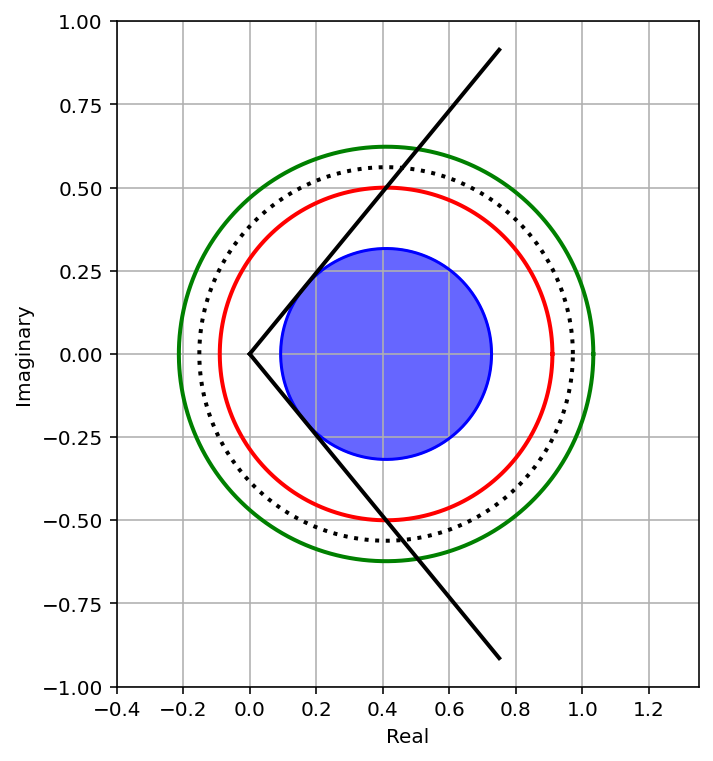}
\caption{}
\label{fig:sample_plot}
\end{figure}

Figure 3 represents the same for the operator $D_{\phi}+0.66I,$ where $\phi(z)=\frac12z.$ Clearly, this operator is sectorial with sectorial index $\frac{\pi}{2.54}$ and is also Berezin sectorial with index $\frac{\pi}{6.27}.$

\begin{figure}[h]
\centering
\includegraphics[width=8cm]{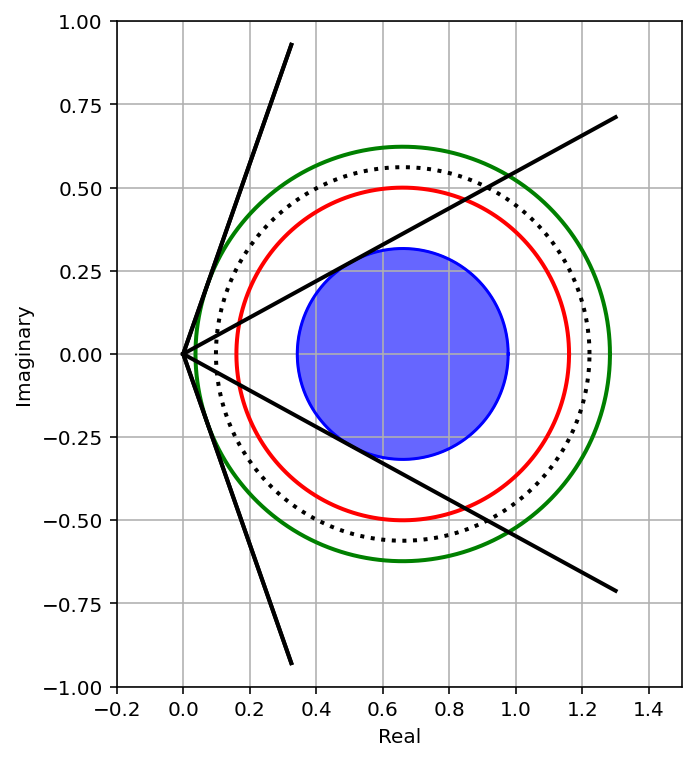}
\caption{}
\label{fig:sample_plot}
\end{figure}

These examples demonstrate that even operators that are not sectorial in the classical sense can be transformed into Berezin sectorial operators. Notably, the Berezin sectorial index is often strictly smaller than the classical sectorial index, as illustrated by the operator $D_{\phi}+0.66I,$ where $\phi(z)=\frac12z.$ As mentioned earlier, in this case the classical sectorial index is $\frac{\pi}{2.54},$ while the Berezin sectorial index reduces to $\frac{\pi}{6.27},$ highlighting the sharper bounds achievable through the Berezin sectorial approach. This observations motivate the study of Berezin number inequalities for Berezin sectorial operators from a geometric perspective and allows to develop Berezin number inequalities to operators that are not classically sectorial.

\section{Inequalities for Berezin sectorial operators}\label{S4}
 
In this section, we derive several inequalities involving the Berezin number for Berezin sectorial operators.
We start with the following lemma, which is used to get our next results.

\begin{lemma}
    Let $T\in \Pi_{\theta}^{\textbf{Ber}}$. Then\label{main}
    \[ \sin\theta\,\textbf{ber}(T)\ge\textbf{ber}(\Im(T)). \]
\end{lemma}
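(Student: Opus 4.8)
The plan is to work pointwise on the Berezin transform and exploit the sectoriality constraint on each value $\widetilde{T}(\lambda)$. First I would fix $\lambda \in \mathscr{X}$ and write $\widetilde{T}(\lambda) = \langle T\hat{k}_\lambda, \hat{k}_\lambda\rangle = a(\lambda) + i b(\lambda)$, where $a(\lambda) = \langle \Re(T)\hat{k}_\lambda, \hat{k}_\lambda\rangle = \widetilde{\Re(T)}(\lambda)$ and $b(\lambda) = \langle \Im(T)\hat{k}_\lambda, \hat{k}_\lambda\rangle = \widetilde{\Im(T)}(\lambda)$, both real since $\Re(T)$ and $\Im(T)$ are self-adjoint. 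The hypothesis $T \in \Pi_\theta^{\textbf{Ber}}$ says exactly that $\widetilde{T}(\lambda) \in S_\theta$, i.e. $a(\lambda) \ge 0$ and $|b(\lambda)| \le (\tan\theta)\, a(\lambda)$, equivalently (since $a(\lambda)^2 + b(\lambda)^2 = |\widetilde{T}(\lambda)|^2$) the angle bound gives $|b(\lambda)| \le |\widetilde{T}(\lambda)| \sin\theta$.

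The key inequality is the elementary geometric fact: if $z = a + ib \in S_\theta$ with $\theta \in [0, \pi/2)$, then $|\Im z| = |b| \le |z| \sin\theta$. This is immediate from $|\arg z| \le \theta$: writing $z = |z| e^{i\psi}$ with $|\psi| \le \theta$, we get $|b| = |z| |\sin\psi| \le |z|\sin\theta$ because $\sin$ is increasing on $[0,\pi/2]$. Applying this to $z = \widetilde{T}(\lambda)$ yields
\[
|\widetilde{\Im(T)}(\lambda)| = |b(\lambda)| \le |\widetilde{T}(\lambda)|\,\sin\theta \le \sin\theta \cdot \textbf{ber}(T),
\]
where the last step uses $|\widetilde{T}(\lambda)| \le \textbf{ber}(T) = \sup_\mu |\widetilde{T}(\mu)|$.

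Finally I would take the supremum over $\lambda \in \mathscr{X}$ on the left-hand side: since the right-hand side $\sin\theta \cdot \textbf{ber}(T)$ is independent of $\lambda$, we obtain
\[
\textbf{ber}(\Im(T)) = \sup_{\lambda \in \mathscr{X}} |\widetilde{\Im(T)}(\lambda)| \le \sin\theta\,\textbf{ber}(T),
\]
which is the claim. There is no real obstacle here; the only point requiring a line of care is the identity $\widetilde{\Im(T)}(\lambda) = \Im(\widetilde{T}(\lambda))$, which follows from linearity of the Berezin transform together with $\widetilde{T^*}(\lambda) = \overline{\widetilde{T}(\lambda)}$, and the observation that $\Im(T)$ being self-adjoint forces its Berezin transform to be real-valued so that $|\widetilde{\Im(T)}(\lambda)|$ really is the modulus of the imaginary part of $\widetilde{T}(\lambda)$.
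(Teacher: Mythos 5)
Your proof is correct and follows essentially the same route as the paper: both arguments work pointwise, using the sectoriality of $\widetilde{T}(\lambda)$ to get $\bigl|\langle \Im(T)\hat{k}_{\lambda},\hat{k}_{\lambda}\rangle\bigr|\le \sin\theta\,\bigl|\langle T\hat{k}_{\lambda},\hat{k}_{\lambda}\rangle\bigr|\le \sin\theta\,\textbf{ber}(T)$ and then take the supremum over $\lambda$. The only cosmetic difference is that you derive the pointwise estimate from the polar form $z=|z|e^{i\psi}$, $|\psi|\le\theta$, whereas the paper obtains the same bound from $\Re\widetilde{T}(\lambda)\ge\cot\theta\,|\Im\widetilde{T}(\lambda)|$ together with $\cot^2\theta+1=\csc^2\theta$; your remark that $\theta=0$ is covered automatically is a slight tidying of the paper's separate (and somewhat overstated) treatment of that case.
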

\begin{proof}
 For $\theta=0$, the desired inequality is trivially obtained as the assumption that $T\in \Pi_{\theta}^{\textbf{Ber}}$ implies that $\Im(A) =0$.
 Next, we consider $\theta\neq 0$ and
      $\hat{k}_{\lambda}$ be a normalized reproducing kernel of $\mathscr{H}$. Then we have
     \begin{eqnarray*}
         \big|\langle T\hat{k}_{\lambda},\hat{k}_{\lambda} \rangle \big|& = & \sqrt{\langle \Re (T)\hat{k}_{\lambda},\hat{k}_{\lambda}\rangle^2+\langle \Im (T)\hat{k}_{\lambda},\hat{k}_{\lambda}\rangle^2}\nonumber\\
         & \ge &\sqrt{ \cot^2\theta\,\langle \Im(T)\hat{k}_{\lambda},\hat{k}_{\lambda}\rangle^2+\langle \Im (T)\hat{k}_{\lambda},\hat{k}_{\lambda}\rangle^2}\nonumber\\
          & = & \csc\theta\,\big|\langle \Im(T)\hat{k}_{\lambda},\hat{k}_{\lambda}\rangle\big|.
     \end{eqnarray*}
     It follows that \begin{eqnarray}
        \sin\theta\, \textbf{ber}(T)\ge \big|\langle \Im(T)\hat{k}_{\lambda},\hat{k}_{\lambda}\rangle\big|.\label{bbvv}
     \end{eqnarray}
Therefore, taking supremum over all $\lambda\in \mathscr{X}$, we obtain the desired inequality.
\end{proof}
Now, we obtain a lower bound of the Berezin number.
\begin{theorem}\label{first}
    Let $T\in \Pi_{\theta}^{\textbf{Ber}}$ with $\theta\neq 0$. Then
    \[\textbf{ber}(T)\ge\frac{\csc \theta}{2}\textbf{ber}\left(\Re(T)\pm\Im(T)\right)+\
    \frac{\csc\theta}{2}\Big(\textbf{ber}(\Im(T))-\textbf{ber}(\Re(T))\Big).\]
\end{theorem}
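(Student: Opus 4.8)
The plan is to derive the inequality by combining Lemma \ref{main} with the elementary subadditivity of the Berezin number. The first step is to record that $\textbf{ber}$ is subadditive: for any $A,B\in\mathscr{B}(\mathscr{H})$ and any $\lambda\in\mathscr{X}$ one has $|\langle(A+B)\hat k_\lambda,\hat k_\lambda\rangle|\le|\langle A\hat k_\lambda,\hat k_\lambda\rangle|+|\langle B\hat k_\lambda,\hat k_\lambda\rangle|$, and taking the supremum over $\lambda$ gives $\textbf{ber}(A+B)\le\textbf{ber}(A)+\textbf{ber}(B)$; also $\textbf{ber}(-B)=\textbf{ber}(B)$ directly from the definition. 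Applying this with $A=\Re(T)$ and $B=\pm\,\Im(T)$ yields
\[\textbf{ber}\!\left(\Re(T)\pm\Im(T)\right)\le\textbf{ber}(\Re(T))+\textbf{ber}(\Im(T)),\]
hence, after moving $\textbf{ber}(\Re(T))$ across and adding $\textbf{ber}(\Im(T))$,
\[\textbf{ber}\!\left(\Re(T)\pm\Im(T)\right)+\textbf{ber}(\Im(T))-\textbf{ber}(\Re(T))\le 2\,\textbf{ber}(\Im(T)).\]

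Next I would multiply through by $\tfrac{\csc\theta}{2}$; this is legitimate and preserves the inequality because $\theta\in(0,\tfrac{\pi}{2})$ forces $\csc\theta>0$, and it turns the left-hand side into exactly the expression appearing in the theorem. It then remains only to bound $\csc\theta\,\textbf{ber}(\Im(T))$ above by $\textbf{ber}(T)$, which is precisely Lemma \ref{main} rewritten as $\textbf{ber}(\Im(T))\le\sin\theta\,\textbf{ber}(T)$ and divided by $\sin\theta$ (valid since $\theta\neq0$). Chaining the two displays gives the asserted bound, and the $+$ and $-$ versions are handled at once since $\Re(T)\pm\Im(T)=\Re(T)+(\pm\Im(T))$ and $\textbf{ber}(\pm\Im(T))=\textbf{ber}(\Im(T))$.

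There is no genuine obstacle: the content is simply the observation that, after rearranging, the right-hand side of the claimed inequality is controlled by $\csc\theta\,\textbf{ber}(\Im(T))$, to which Lemma \ref{main} applies, and the only thing needing a (trivial) check is that scaling by $\csc\theta$ is harmless when $\theta\neq0$. If one preferred a pointwise flavour, one could also note that for each $\lambda$, writing $a=\langle\Re(T)\hat k_\lambda,\hat k_\lambda\rangle\ge0$ and $b=\langle\Im(T)\hat k_\lambda,\hat k_\lambda\rangle$ with $|b|\le\tan\theta\,a$ (since $\widetilde{T}(\lambda)\in S_\theta$), one has $|\widetilde{T}(\lambda)|=\sqrt{a^2+b^2}\ge\csc\theta\,|b|\ge\tfrac{\csc\theta}{2}\bigl(|a\pm b|+|b|-a\bigr)$; passing from this to the stated form still needs Lemma \ref{main} to replace $|b|$ by $\textbf{ber}(\Im(T))$, so the subadditivity route is the cleaner one to present.
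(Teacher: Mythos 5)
Your argument is correct, and it is genuinely shorter than the paper's. You obtain the theorem purely from the subadditivity $\textbf{ber}(\Re(T)\pm\Im(T))\le\textbf{ber}(\Re(T))+\textbf{ber}(\Im(T))$ together with Lemma \ref{main} in the form $\csc\theta\,\textbf{ber}(\Im(T))\le\textbf{ber}(T)$: after multiplying the rearranged subadditivity inequality by $\tfrac{\csc\theta}{2}>0$, the right-hand side of the theorem is bounded above by $\csc\theta\,\textbf{ber}(\Im(T))$, and the lemma finishes the proof. The paper instead starts from the Cartesian identity $|\widetilde T(\lambda)|^2=\langle\Re(T)\hat k_\lambda,\hat k_\lambda\rangle^2+\langle\Im(T)\hat k_\lambda,\hat k_\lambda\rangle^2$ to get $\textbf{ber}(T)\ge\textbf{ber}(\Re(T))$, combines this with Lemma \ref{main} into the sharper intermediate bound $\textbf{ber}(T)\ge\max\{\textbf{ber}(\Re(T)),\csc\theta\,\textbf{ber}(\Im(T))\}$, rewrites the maximum as mean plus half the absolute difference, and then reaches the stated form through subadditivity and a further absolute-value manipulation. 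What your route buys is economy and transparency: you need neither $\textbf{ber}(T)\ge\textbf{ber}(\Re(T))$ nor the max identity, and your computation makes explicit that the theorem's lower bound never exceeds $\csc\theta\,\textbf{ber}(\Im(T))$, i.e.\ it is already a consequence of Lemma \ref{main} alone. What the paper's route buys is the stronger intermediate estimate $\textbf{ber}(T)\ge\max\{\textbf{ber}(\Re(T)),\csc\theta\,\textbf{ber}(\Im(T))\}$, which is the quantity actually compared with \cite[Th.~3.1]{Sen_23} in the subsequent remark and is reused in the same pattern in Lemma \ref{a*a+aa*}; your derivation, while cleaner for the stated theorem, discards that extra information.
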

\begin{proof}
    Let $\hat{k}_{\lambda}$ be a normalized reproducing kernel of $\mathscr{H}$. Then, using the Cartesian decomposition of $T$, we have \[
    \big|\langle T\hat{k}_{\lambda},\hat{k}_{\lambda}\rangle\big|^2=\big|\langle \Re(T) \hat{k}_{\lambda},\hat{k}_{\lambda}\rangle\big|^2+\big|\langle \Im(T) \hat{k}_{\lambda},\hat{k}_{\lambda}\rangle\big|^2.
    \]
    From this, we infer that 
    \begin{eqnarray}
        \textbf{ber}(T)\ge \textbf{ber}(\Re(T)).\label{real}
    \end{eqnarray}
    Thus, combining Lemma \ref{main} and inequality (\ref{real}) we obtain 
    \begin{eqnarray*}
        \textbf{ber}(T)&\ge& \max\left\{\textbf{ber}(\Re(T)), \csc\theta\textbf{ber}(\Im(T))\right\}\\
        &=& \frac{1}{2}\left(\textbf{ber}(\Re(T))+ \csc\theta\textbf{ber}(\Im(T))\right)+\frac{1}{2}\left|\textbf{ber}(\Re(T))-\csc\theta\textbf{ber}(\Im(T))\right|\\
        &=& \frac{\csc \theta}{2}\left(\textbf{ber}(\Re(T))+\textbf{ber}(\Im(T))\right)+ \frac{(1-\csc\theta)}{2}\textbf{ber}(\Re(T))\\
    &&\,\,\,\,\,\,\,\,\,\,\,\,\,\,\,\,\,\,\,\,\,\,\,\,\,\,\,\,\,\,\,\,\,\,\,\,\,+\frac{1}{2}\left|\textbf{ber}(\Re(T))-\csc\theta\textbf{ber}(\Im(T))\right|\\
     &\ge& \frac{\csc \theta}{2}\textbf{ber}\big(\Re(T)\pm\Im(T)\big)-\frac{\csc\theta}{2}\textbf{ber}(\Re(T))\\
    &&\,\,\,\,\,\,\,\,\,\,\,\,\,\,\,\,\,\,\,\,\,\,\,\,\,\,\,\,\,\,\,\,\,\,\,\,\,+\frac{1}{2}\left|\textbf{ber}(\Re(T))-\left(\textbf{ber}(\Re(T))-\csc\theta\textbf{ber}(\Im(T))\right)\right|\\
    &=& \frac{\csc \theta}{2}\textbf{ber}\left(\Re(T)\pm\Im(T)\right)+\
    \frac{\csc\theta}{2}\Big(\textbf{ber}(\Im(T))-\textbf{ber}(\Re(T))\Big),
    \end{eqnarray*} as required.
\end{proof}

 Let $T\in \Pi_\theta^{\textbf{Ber}}$  with
 $\Im(T)<0 $. Then, it is clear from the Cartesian decomposition of $T$ that $iT\in \Pi_\theta^{\textbf{Ber}}$ also. Conversely, if $T$ and $iT\in \Pi_\theta^{\textbf{Ber}}$ then $\Im(T)<0$. Let $T\in \mathscr{B(H)}$ be such that
 \begin{eqnarray}
     \textbf{Ber}(T)\subseteq \big\{re^{-i\theta}:\theta_1\le \theta \le\theta_2\big\},\label{spcl}
 \end{eqnarray}
where $r > 0$ and $\theta_1,\theta_2\in(0,\frac{\pi}{2})$. Then, $T$ and $iT\in \Pi_\theta^{\textbf{Ber}}$  with
 sectorial index $\theta_2$ and $\frac{\pi}{2}-\theta_1$, respectively.

\begin{cor}\label{cow}
     Let $T\in \mathscr{B(H)}$ satisfies the property (\ref{spcl}). Then
    \[\textbf{ber}(T)\ge\frac{\csc \theta}{2}\textbf{ber}\left(\Re(T)\pm\Im(T)\right)+\frac{\csc\theta}{2}\left|\textbf{ber}(\Re(T))-\textbf{ber}(\Im(T))\right|,\]
    where $\theta=\max\{\theta_2,\frac{\pi}{2}-\theta_1\}.$
\end{cor}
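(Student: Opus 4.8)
The plan is to deduce Corollary \ref{cow} directly from Theorem \ref{first} by exploiting the two membership facts recorded in the paragraph preceding the statement. Specifically, the hypothesis \eqref{spcl} guarantees that $\textbf{Ber}(T)$ lies in the lower sector $\{re^{-i\vartheta}:\theta_1\le\vartheta\le\theta_2\}$ with $r>0$ and $0<\theta_1\le\theta_2<\frac{\pi}{2}$, so $T\in\Pi_{\theta_2}^{\textbf{Ber}}$ and, since multiplication by $i$ rotates the sector by $\frac{\pi}{2}$, $iT\in\Pi_{\pi/2-\theta_1}^{\textbf{Ber}}$. In particular, both $T$ and $iT$ lie in $\Pi_\theta^{\textbf{Ber}}$ for $\theta=\max\{\theta_2,\tfrac{\pi}{2}-\theta_1\}$, because enlarging the semi-angle only enlarges the sector.

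The first step is to apply Theorem \ref{first} to $T$ itself with this common angle $\theta$, which yields
\[
\textbf{ber}(T)\ge\frac{\csc\theta}{2}\textbf{ber}\big(\Re(T)\pm\Im(T)\big)+\frac{\csc\theta}{2}\big(\textbf{ber}(\Im(T))-\textbf{ber}(\Re(T))\big).
\]
The second step is to apply Theorem \ref{first} to $iT$. Here one uses the Cartesian decomposition $iT=\Re(iT)+i\Im(iT)$ with $\Re(iT)=-\Im(T)$ and $\Im(iT)=\Re(T)$; one must also check that $\Re(T)\pm\Im(T)$ and $\Re(iT)\pm\Im(iT)=-\Im(T)\pm\Re(T)$ have the same Berezin number (they differ by a sign and an order swap inside $\pm$, and $\textbf{ber}(-S)=\textbf{ber}(S)$). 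Also $\textbf{ber}(iT)=\textbf{ber}(T)$. Substituting these identities, the bound for $iT$ becomes
\[
\textbf{ber}(T)\ge\frac{\csc\theta}{2}\textbf{ber}\big(\Re(T)\pm\Im(T)\big)+\frac{\csc\theta}{2}\big(\textbf{ber}(\Re(T))-\textbf{ber}(\Im(T))\big).
\]

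The final step is to combine the two displayed inequalities: the two error terms are negatives of each other, so taking the larger of the two lower bounds replaces $\textbf{ber}(\Im(T))-\textbf{ber}(\Re(T))$ by $\big|\textbf{ber}(\Re(T))-\textbf{ber}(\Im(T))\big|$, giving exactly the asserted inequality. I do not anticipate a serious obstacle; the only point requiring a little care is verifying that passing from $T$ to $iT$ preserves the quantities $\textbf{ber}(T)$ and $\textbf{ber}(\Re(T)\pm\Im(T))$ while swapping the roles of $\Re(T)$ and $\Im(T)$ (up to sign) in the correction term, and that $\theta=\max\{\theta_2,\tfrac{\pi}{2}-\theta_1\}<\frac{\pi}{2}$ so that $\csc\theta$ is finite and Theorem \ref{first} is applicable to both operators.
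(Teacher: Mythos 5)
Your proposal is correct and follows essentially the same route as the paper: both arguments note that the sector condition (\ref{spcl}) places $T$ and $iT$ in $\Pi_{\theta}^{\textbf{Ber}}$ with $\theta=\max\{\theta_2,\tfrac{\pi}{2}-\theta_1\}\in(0,\tfrac{\pi}{2})$, apply Theorem \ref{first} to each, and combine the two lower bounds to replace the signed difference by $\left|\textbf{ber}(\Re(T))-\textbf{ber}(\Im(T))\right|$. Your explicit checks that $\Re(iT)=-\Im(T)$, $\Im(iT)=\Re(T)$, $\textbf{ber}(iT)=\textbf{ber}(T)$ and that the $\pm$ terms are unaffected simply make precise what the paper leaves implicit.
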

\begin{proof}
 Since $T$ and $iT$ are in $\Pi_{\theta}^{\textbf{Ber}}$, by using Theorem \ref{first} we get,
 \begin{eqnarray}
     \textbf{ber}(T)\ge\frac{\csc \theta}{2}\textbf{ber}\big(\Re(T)\pm\Im(T)\big)+\frac{\csc\theta}{2}\big(\textbf{ber}(\Im(T))-\textbf{ber}(\Re(T))\big)\label{cat}
 \end{eqnarray}
 and
 \begin{eqnarray}
     \textbf{ber}(T)\ge\frac{\csc \theta}{2}\textbf{ber}\big(\Re(T)\pm\Im(T)\big)+\frac{\csc\theta}{2}\big(\textbf{ber}(\Re(T))-\textbf{ber}(\Im(T))\big).\label{dog}
 \end{eqnarray}
 Thus, by combining inequalities (\ref{cat}) and (\ref{dog}), we obtain the desired inequality.
\end{proof}
\begin{remark} Since $\csc \theta>1$ for any  $\theta\in (0,\frac{\pi}{2})$. Therefore Corollary \ref{cow} gives 
\begin{eqnarray*}
    \textbf{ber}(T)&\ge&\frac{\csc \theta}{2}\textbf{ber}\big(\Re(T)\pm\Im(T)\big)+\frac{\csc\theta}{2}\big|\textbf{ber}(\Re(T))-\textbf{ber}(\Im(T))\big|\\
    &\ge&\frac{1}{2}\big(\textbf{ber}(\Re(T))\pm\textbf{ber}(\Im(T))\big)+\frac{1}{2}\big|\textbf{ber}(\Re(T))-\textbf{ber}(\Im(T))\big|.
\end{eqnarray*}
    Thus, we would like to remark that Corollary \ref{cow} gives a sharper bound than the existing
 bound in \cite[Th. 3.1]{Sen_23}, namely,
 \[
   \textbf{ber}(T)\ge \frac{1}{2}\big(\textbf{ber}(\Re(T))\pm\textbf{ber}(\Im(T))\big)+\frac{1}{2}\big|\textbf{ber}(\Re(T))-\textbf{ber}(\Im(T))\big|.
 \]
\end{remark}

In the next result, we obtain a lower bound for the product of two operators, provided that the product operator is a Berezin sectorial operator with a non-zero sectorial index.

\begin{theorem}\label{kali0}
    Let $S,T\in \mathscr{B(H)}$ and  $T^*S\in \Pi_{\theta}^{\textbf{Ber}}$  with $\theta\neq 0$. Then for any  $\alpha\in \mathbb{R}^+$ 
\begin{eqnarray*}
    \textbf{ber}(T^*S)\ge \max\{\beta_1,\beta_2\},
\end{eqnarray*}
 \begin{eqnarray*}
  \mbox{ where}\,\,\,\,\,\,\,\,\,\, \beta_1 &=&\frac{\csc \theta}{2\alpha} \Big(\||S+i\alpha T|^2\|_{ber}-\| S^*S+\alpha^2 T^*T\|_{ber}\Big),\\
    \mbox{ and}\,\,\,\,\,\,\,\,\,\,  \beta_2 &=&\frac{\csc \theta}{2\alpha} \Big(\| S^*S+\alpha^2 T^*T\|_{ber}-\||S-i\alpha T|^2\|_{ber}\Big).
\end{eqnarray*}
  
\end{theorem}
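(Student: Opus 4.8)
The plan is to push everything through Lemma~\ref{main} applied to the operator $T^*S$: since $T^*S\in\Pi_\theta^{\textbf{Ber}}$ and $\theta\ne0$, that lemma gives $\textbf{ber}(T^*S)\ge\csc\theta\,\textbf{ber}\bigl(\Im(T^*S)\bigr)$, so it will be enough to bound $\textbf{ber}(\Im(T^*S))$ from below by $\frac{1}{2\alpha}$ times each of the two bracketed quantities. The bridge between $\Im(T^*S)$ and the positive operators appearing in $\beta_1,\beta_2$ is the identity
\[
|S\pm i\alpha T|^2=S^*S+\alpha^2T^*T\pm2\alpha\,\Im(T^*S),
\]
which I would verify by expanding $(S\pm i\alpha T)^*(S\pm i\alpha T)$ (using $\overline{i\alpha}=-i\alpha$) and substituting $T^*S-S^*T=2i\,\Im(T^*S)$; equivalently,
\[
2\alpha\,\Im(T^*S)=|S+i\alpha T|^2-(S^*S+\alpha^2T^*T)=(S^*S+\alpha^2T^*T)-|S-i\alpha T|^2.
\]

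Next I would isolate the following elementary observation: if $P,Q\in\mathscr{B(H)}$ are positive, then $\sup_{\lambda\in\mathscr{X}}\bigl(\widetilde{P}(\lambda)-\widetilde{Q}(\lambda)\bigr)\ge\|P\|_{ber}-\|Q\|_{ber}$. Indeed, by \cite{pintuani} together with positivity, $\|P\|_{ber}=\textbf{ber}(P)=\sup_\lambda\widetilde{P}(\lambda)$, so choosing $(\lambda_n)$ with $\widetilde{P}(\lambda_n)\to\|P\|_{ber}$ and using $\widetilde{Q}(\lambda_n)\le\|Q\|_{ber}$ gives the claim in the limit. Since $|S\pm i\alpha T|^2$ and $S^*S+\alpha^2T^*T$ are positive, this observation applies to them. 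Now for each $\lambda$, $2\alpha\,\widetilde{\Im(T^*S)}(\lambda)\le2\alpha\,\textbf{ber}(\Im(T^*S))$; reading $2\alpha\,\widetilde{\Im(T^*S)}(\lambda)$ off the first of the two displayed forms, taking the supremum over $\lambda$, and applying the observation yields
\[
2\alpha\,\textbf{ber}(\Im(T^*S))\ge\||S+i\alpha T|^2\|_{ber}-\|S^*S+\alpha^2T^*T\|_{ber},
\]
while the second form gives $2\alpha\,\textbf{ber}(\Im(T^*S))\ge\|S^*S+\alpha^2T^*T\|_{ber}-\||S-i\alpha T|^2\|_{ber}$. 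Dividing by $2\alpha>0$, multiplying by $\csc\theta>0$, and invoking Lemma~\ref{main} gives $\textbf{ber}(T^*S)\ge\beta_1$ and $\textbf{ber}(T^*S)\ge\beta_2$ respectively, so $\textbf{ber}(T^*S)\ge\max\{\beta_1,\beta_2\}$.

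I do not anticipate a genuine obstacle here; the proof amounts to Lemma~\ref{main} combined with an algebraic identity. The two points that deserve care are: the supremum defining a Berezin transform need not be attained, so the comparison $\sup_\lambda(\widetilde{P}-\widetilde{Q})\ge\|P\|_{ber}-\|Q\|_{ber}$ should be argued through an approximating sequence rather than by plugging in a single extremal normalized kernel; and the passage from the pointwise estimate on $\widetilde{\Im(T^*S)}$ to the stated \emph{Berezin-norm} bounds genuinely uses the positivity of $|S\pm i\alpha T|^2$ and $S^*S+\alpha^2T^*T$ through the equality $\|\cdot\|_{ber}=\textbf{ber}(\cdot)$ for positive operators.
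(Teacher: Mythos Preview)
Your proof is correct and follows essentially the same route as the paper: both expand $|S\pm i\alpha T|^2=S^*S+\alpha^2T^*T\pm2\alpha\,\Im(T^*S)$, bound the $\Im(T^*S)$ contribution via Lemma~\ref{main} (the paper uses its pointwise form~\eqref{bbvv} directly, you pass through $\textbf{ber}(\Im(T^*S))$), and take the supremum over $\lambda$ using positivity to identify $\textbf{ber}$ with $\|\cdot\|_{ber}$. Your explicit isolation of the inequality $\sup_\lambda(\widetilde P-\widetilde Q)\ge\|P\|_{ber}-\|Q\|_{ber}$ for positive $P,Q$, argued via an approximating sequence, is a nice clarification of a step the paper leaves implicit.
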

\begin{proof}  Let $\hat{k}_{\lambda}$ be a normalized reproducing kernel of $\mathscr{H}$. Then we have
    \begin{eqnarray*}
        \| (S+i\alpha T)\hat{k}_{\lambda}\|^2\nonumber
        &=& \langle  S^*S\hat{k}_{\lambda},\hat{k}_{\lambda} \rangle+\alpha^2\langle T^*T\hat{k}_{\lambda},\hat{k}_{\lambda} \rangle+2\alpha\big\langle(\Im(T^*S)\hat{k}_{\lambda},\hat{k}_{\lambda} \big\rangle\nonumber\\
         &=& \langle (S^*S+ \alpha^2 T^*T)\hat{k}_{\lambda},\hat{k}_{\lambda} \rangle+2\alpha\big|\big\langle \Im(T^*S)\hat{k}_{\lambda},\hat{k}_{\lambda} \big\rangle\big|\nonumber\\
         &\le&\| S^*S+ \alpha^2 T^*T\|_{ber}
+2\alpha \sin\theta \textbf{ber}(T^*S).\,\,\Big(\mbox{by inequality (\ref{bbvv})}\Big)
    \end{eqnarray*}

    Hence, taking supremum over all $\lambda\in \Omega$, we get 
\begin{eqnarray}
    \frac{\csc \theta}{2\alpha} \Big(\||S+i\alpha T|^2\|_{ber}-\| S^*S+\alpha^2 T^*T\|_{ber}\Big)\le\textbf{ber}(T^*S)\label{Tss1}
\end{eqnarray}
Again
    \begin{eqnarray*}
       \left \||S-i\alpha T|^2\right\|_{ber}&\ge& \langle |S-i\alpha T|^2\hat{k}_{\lambda}, \hat{k}_{\lambda}\rangle\nonumber\\
   &=&\langle S^*S\hat{k}_{\lambda},\hat{k}_{\lambda}\rangle+\alpha^2\langle
  T^*T\hat{k}_{\lambda},\hat{k}_{\lambda}\rangle- 2\alpha\Im\langle T^*S\hat{k}_{\lambda}, \hat{k}_{\lambda}\rangle\nonumber\\
    &\ge&\langle (S^*S+\alpha^2T^*T)\hat{k}_{\lambda},\hat{k}_{\lambda}\rangle- 2\alpha|\Im\langle S^*T\hat{k}_{\lambda}, \hat{k}_{\lambda}\rangle|\\
     &\ge&\langle (S^*S+\alpha^2T^*T)\hat{k}_{\lambda},\hat{k}_{\lambda}\rangle- 2\alpha\sin\theta\textbf{ber}(T^*S).\,\,\Big(\mbox{by inequality (\ref{bbvv})}\Big)\end{eqnarray*}
  Thus, taking supremum over all $\lambda\in \Omega$, we get 
       \begin{eqnarray}
             \frac{\csc \theta}{2\alpha} \Big(\| S^*S+\alpha^2 T^*T\|_{ber}-\||S-i\alpha T|^2\|_{ber}\Big)\le\textbf{ber}(T^*S)\label{Tss3}
       \end{eqnarray}
Finally, combining inequalities (\ref{Tss1}) and (\ref{Tss3}), we arrive at the required result.
\end{proof}

The following two inequalities are immediate from Theorem \ref{kali0}  by taking $T=S^*$ and $T=I$, respectively.
\begin{cor}
(i) If $S^2\in \Pi_{\theta}^{\textbf{ber }}$ with $\theta\neq 0$, then \[\textbf{ber}(S^2)\ge \max\{\beta_1 ,\beta_2 \},\]  \begin{eqnarray*}
       \mbox{ where}\,\,\,\,\,\,\,\,\,\, \beta_1 &=& \frac{\csc \theta}{2\alpha} \Big(\||S+i\alpha S^*|^2\|_{ber}-\| S^*S+\alpha^2 SS^*\|_{ber}\Big),\\
         \mbox{ and}\,\,\,\,\,\,\,\,\,\,\beta_2 &=&\frac{\csc \theta}{2\alpha} \Big(\| S^*S+\alpha^2 SS^*\|_{ber}-\||S-i\alpha S^*|^2\|_{ber}\Big).
    \end{eqnarray*}\\
  
  (ii)  If $S\in \Pi_{\theta}^{\textbf{ber }}$ with $\theta\neq 0$, then \[\textbf{ber}(S)\ge \max\{\beta_1 ,\beta_2 \},\]  \begin{eqnarray*}
      \mbox{ where}\,\,\,\,\,\,\,\,\,\,  \beta_1 &=& \frac{\csc \theta}{2\alpha} \Big(\||S+i\alpha I|^2\|_{ber}-\| S^*S+\alpha^2 I\|_{ber}\Big),\\
      \mbox{ and}\,\,\,\,\,\,\,\,\,\, \beta_2 &=&\frac{\csc \theta}{2\alpha} \Big(\| S^*S+\alpha^2 I\|_{ber}-\||S-i\alpha I|^2\|_{ber}\Big).
    \end{eqnarray*}

\end{cor}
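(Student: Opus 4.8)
The plan is to obtain both corollaries as direct specializations of Theorem \ref{kali0}. For part (i), I would set $T=S^*$ in Theorem \ref{kali0}. Under this substitution $T^*S$ becomes $(S^*)^*S=SS$... wait, one must be careful: $T^*S = (S^*)^*S = S\cdot S = S^2$, so the hypothesis $T^*S\in\Pi_\theta^{\textbf{Ber}}$ reads exactly as $S^2\in\Pi_\theta^{\textbf{Ber}}$, matching the stated assumption. Then I would simply rewrite each ingredient in Theorem \ref{kali0}: $|S+i\alpha T|^2 = |S+i\alpha S^*|^2$, $S^*S+\alpha^2 T^*T = S^*S+\alpha^2 (S^*)^*S^* = S^*S+\alpha^2 SS^*$, and $|S-i\alpha T|^2 = |S-i\alpha S^*|^2$. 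Substituting these into the formulas for $\beta_1$ and $\beta_2$ gives precisely the claimed bounds.

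For part (ii), I would set $T=I$ in Theorem \ref{kali0}. Then $T^*S = I^*S = S$, so the hypothesis $T^*S\in\Pi_\theta^{\textbf{Ber}}$ becomes $S\in\Pi_\theta^{\textbf{Ber}}$, as stated. The three ingredients become $|S+i\alpha T|^2 = |S+i\alpha I|^2$, $S^*S+\alpha^2 T^*T = S^*S+\alpha^2 I$, and $|S-i\alpha T|^2 = |S-i\alpha I|^2$, and plugging these into $\beta_1,\beta_2$ yields the stated inequalities. In both cases the conclusion $\textbf{ber}(T^*S)\ge\max\{\beta_1,\beta_2\}$ translates to $\textbf{ber}(S^2)\ge\max\{\beta_1,\beta_2\}$ and $\textbf{ber}(S)\ge\max\{\beta_1,\beta_2\}$, respectively.

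There is essentially no obstacle here; the only point requiring a moment's care is bookkeeping the adjoints correctly in part (i) — verifying that $T^*S$ with $T=S^*$ is $S^2$ (not $S^{*2}$ or $SS^*$) and that $T^*T = SS^*$ — and confirming that all the Berezin-norm quantities appearing in Theorem \ref{kali0} specialize to well-defined operators (they do, since $S\in\mathscr{B(H)}$ implies $S^*, I\in\mathscr{B(H)}$, so Theorem \ref{kali0} applies with $S$ in the role of $S$ and $S^*$ or $I$ in the role of $T$). Thus the proof is just: apply Theorem \ref{kali0} with the indicated choices of $T$ and simplify.

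\begin{proof}
Both statements follow at once from Theorem \ref{kali0}.

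(i) Apply Theorem \ref{kali0} with $T=S^*$. Then $T^*S = (S^*)^*S = S^2$, so the hypothesis $T^*S\in\Pi_\theta^{\textbf{Ber}}$ is exactly $S^2\in\Pi_\theta^{\textbf{Ber}}$. Moreover $T^*T = (S^*)^*S^* = SS^*$, hence $S^*S+\alpha^2 T^*T = S^*S+\alpha^2 SS^*$, while $|S+i\alpha T|^2 = |S+i\alpha S^*|^2$ and $|S-i\alpha T|^2 = |S-i\alpha S^*|^2$. Substituting these into the bounds of Theorem \ref{kali0} gives $\textbf{ber}(S^2)\ge\max\{\beta_1,\beta_2\}$ with the stated $\beta_1,\beta_2$.

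(ii) Apply Theorem \ref{kali0} with $T=I$. Then $T^*S = S$, so the hypothesis becomes $S\in\Pi_\theta^{\textbf{Ber}}$. Since $T^*T = I$, we have $S^*S+\alpha^2 T^*T = S^*S+\alpha^2 I$, $|S+i\alpha T|^2 = |S+i\alpha I|^2$ and $|S-i\alpha T|^2 = |S-i\alpha I|^2$. Substituting into Theorem \ref{kali0} yields $\textbf{ber}(S)\ge\max\{\beta_1,\beta_2\}$ with the stated $\beta_1,\beta_2$.
\end{proof}
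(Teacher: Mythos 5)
Your proposal is correct and matches the paper exactly: the paper also obtains the corollary immediately from Theorem \ref{kali0} by taking $T=S^*$ and $T=I$, respectively. Your careful bookkeeping of the adjoints (e.g.\ $T^*S=S^2$ and $T^*T=SS^*$ when $T=S^*$) is precisely the only content required.
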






\begin{remark}
  $(i)$ According to \cite[Cor. 2.24]{pintuani}, the inequality
 \begin{eqnarray}
 \frac{1}{2}   \Big( \| S+i T\|^2_{\textbf{ber}}-\| S^*S+T^*T\|_{\textbf{ber}}\Big)\le \textbf{ber}(T^*S)\label{kali1}.
 \end{eqnarray}
holds. Note that $\|S+i\alpha T \|_{ber}^2\le \||s+i\alpha T|^2\|_{ber}$ and  $\csc \theta >1 $  for any $\theta\in(0,\frac{\pi}{2})$.  Thus,  for $\alpha=1$, Theorem \ref{kali0} improves the  bound of (\ref{kali1}).\\

 $(ii)$  Taking $A=S$ and $B=iT$ in \cite[ Th. 5.1]{garayevlama}, we obtain
\begin{eqnarray}
      \frac{1}{2}  \textbf{ber}(S^*S+T^*T)-\frac{r^2}{2}
     \le \textbf{ber}(T^*S),\label{sad1}
\end{eqnarray} where $\|S-iT\|\le r$. Note that  $\||T-iS|^2\|_{ber}\le \|T-iS\|^2$  and $\cos\theta > 1$ for any $\theta\in(0,\frac{\pi}{2})$. Hence, for $\alpha=1$, the bound of Theorem \ref{kali0} refines the bound in  (\ref{sad1}).\\

$(iii)$ According to \cite[Th 3.2]{pin2}, the following inequality holds:
  \begin{eqnarray}
        \| S+i T\|^2_{\textbf{ber}}-\Big(\|S\|_{ber}^2+\|T\|_{ber}^2+\textbf{ber}^{\frac{1}{2}}(S^*S)\textbf{ber}^{\frac{1}{2}}(T^*T)\Big)\le \textbf{ber}(T^*S).\label{ppsskk}
  \end{eqnarray}

  Let $S=\begin{pmatrix} 1 & 0\\
  0 & 3\end{pmatrix} $ and $T=\begin{pmatrix} \frac{\sqrt{2}-i\sqrt{2}}{2}& 0\\
 0 & \sqrt{2}-i\sqrt{2} \end{pmatrix} $ acting on $\mathscr{H}=\mathbb{C}^2$. Note that  $T^*S\in\Pi_{\frac{\pi}{4}}^{\textbf{Ber}}$.
Then inequality (\ref{ppsskk}) gives the lower bound
 $ 6(\sqrt{2}-1)\le \textbf{ber}(T^*S) $, whereas Theorem \ref{kali0} provides the sharper estimate  $ 6\le\textbf{ber}(T^*S)$. Thus, Theorem \ref{kali0} yields a strictly better lower bound than (\ref{ppsskk}).
\end{remark}




 In the following theorem, we prove inequalities involving the Berezin number and operator norm. We assume that the operator $B:\mathscr{H}\to \mathscr{H}$ is an invertible bounded linear operator with bounded inverse $B^{-1}$. Then it is easy to observe that
\[\sqrt{\langle B^*Bx,x\rangle}\ge \|B^{-1}\|^{-1}\sqrt{\langle x,x \rangle}~~~~\text{for all $x\in \mathscr{H}$.}\]
\begin{theorem}\label{sad4}
    Let $T,S\in \mathscr{B(H)}$ and $S^*T\in \Pi_{\theta}^{\textbf{ber }}$. If $S$ is invertible, then
    \[\||T|^2\|_{ber}^{\frac{1}{2}}\le\|S^{-1}\| \Big(\sin\theta\textbf{ber}( S^*T)+  \frac{1}{2}\left\||T-iS|^2\right\|_{ber}\Big).
    \]
\end{theorem}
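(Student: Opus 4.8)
The plan is to fix a normalized reproducing kernel $\hat{k}_\lambda$, bound $\|T\hat{k}_\lambda\|$ by the claimed expression, and then pass to the supremum over $\lambda$. Since $|T|^2=T^*T$ is positive, we have $\||T|^2\|_{ber}=\textbf{ber}(|T|^2)=\sup_{\lambda}\langle T^*T\hat{k}_\lambda,\hat{k}_\lambda\rangle=\sup_{\lambda}\|T\hat{k}_\lambda\|^2$ by the equality of the Berezin number and the Berezin norm for positive operators (recalled in the introduction), so it suffices to establish $\|T\hat{k}_\lambda\|\le\|S^{-1}\|\big(\sin\theta\,\textbf{ber}(S^*T)+\tfrac12\||T-iS|^2\|_{ber}\big)$ for each $\lambda$.

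First I would bring in the invertibility of $S$. Applying the inequality recalled just before the theorem with $x=\hat{k}_\lambda$ gives $\|S\hat{k}_\lambda\|\ge\|S^{-1}\|^{-1}>0$, hence $1/\|S\hat{k}_\lambda\|\le\|S^{-1}\|$. Writing $\|T\hat{k}_\lambda\|=\|T\hat{k}_\lambda\|\,\|S\hat{k}_\lambda\|/\|S\hat{k}_\lambda\|$ and then using the elementary AM--GM bound $\|T\hat{k}_\lambda\|\,\|S\hat{k}_\lambda\|\le\tfrac12\big(\|T\hat{k}_\lambda\|^2+\|S\hat{k}_\lambda\|^2\big)$ yields
\[
\|T\hat{k}_\lambda\|\le\frac{\|S^{-1}\|}{2}\big(\|T\hat{k}_\lambda\|^2+\|S\hat{k}_\lambda\|^2\big).
\]

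Next I would estimate $\|T\hat{k}_\lambda\|^2+\|S\hat{k}_\lambda\|^2$ via the operator polarization identity. Expanding $\|(T-iS)\hat{k}_\lambda\|^2=\langle|T-iS|^2\hat{k}_\lambda,\hat{k}_\lambda\rangle$ and using $|T-iS|^2=T^*T+S^*S-2\Im(S^*T)$ gives
\[
\|T\hat{k}_\lambda\|^2+\|S\hat{k}_\lambda\|^2=\langle|T-iS|^2\hat{k}_\lambda,\hat{k}_\lambda\rangle+2\,\Im\langle S^*T\hat{k}_\lambda,\hat{k}_\lambda\rangle.
\]
The first term on the right is at most $\||T-iS|^2\|_{ber}$. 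For the second, since $S^*T\in\Pi_{\theta}^{\textbf{Ber}}$, inequality \eqref{bbvv} from the proof of Lemma \ref{main} gives $|\langle\Im(S^*T)\hat{k}_\lambda,\hat{k}_\lambda\rangle|\le\sin\theta\,\textbf{ber}(S^*T)$, so $2\,\Im\langle S^*T\hat{k}_\lambda,\hat{k}_\lambda\rangle\le2\sin\theta\,\textbf{ber}(S^*T)$. Substituting both bounds into the previous two displays produces $\|T\hat{k}_\lambda\|\le\|S^{-1}\|\big(\sin\theta\,\textbf{ber}(S^*T)+\tfrac12\||T-iS|^2\|_{ber}\big)$; since the right-hand side is independent of $\lambda$, taking the supremum over $\lambda$ and using $\||T|^2\|_{ber}^{1/2}=\sup_\lambda\|T\hat{k}_\lambda\|$ completes the argument.

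I do not expect a serious obstacle: the only slightly non-obvious move is to insert the factor $\|S\hat{k}_\lambda\|/\|S\hat{k}_\lambda\|$ and apply AM--GM, rather than trying to discard the $\|S\hat{k}_\lambda\|^2$ term (which would not reproduce the stated constants). Everything else reduces to the polarization identity for $|T-iS|^2$ together with Lemma \ref{main}. One minor point to record is the degenerate case $\theta=0$: there $\Im(S^*T)$ has Berezin range $\{0\}$, both sides of the relevant bound vanish, and the argument goes through verbatim.
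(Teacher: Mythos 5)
Your proof is correct and follows essentially the same route as the paper: expand $\langle|T-iS|^2\hat{k}_\lambda,\hat{k}_\lambda\rangle$ via $|T-iS|^2=T^*T+S^*S-2\Im(S^*T)$, control the imaginary-part term by $\sin\theta\,\textbf{ber}(S^*T)$ using inequality \eqref{bbvv}, and exploit invertibility of $S$ through an AM--GM step before taking the supremum. The only cosmetic difference is that you apply AM--GM to $\|T\hat{k}_\lambda\|$ and $\|S\hat{k}_\lambda\|$, whereas the paper applies it to $\|T\hat{k}_\lambda\|$ and $\|S^{-1}\|^{-1}$; both yield the same intermediate bound.
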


\begin{proof} Let $\hat{k}_{\lambda}$ be a normalized reproducing kernel of $\mathscr{H}$.
    Then  we have
 \begin{eqnarray}
       \left \||T-iS|^2\right\|_{ber}&\ge& \langle |T-iS|^2\hat{k}_{\lambda}, \hat{k}_{\lambda}\rangle\nonumber\\
   &=&\langle T^*T\hat{k}_{\lambda},\hat{k}_{\lambda}\rangle+\langle
  S^*S\hat{k}_{\lambda},\hat{k}_{\lambda}\rangle- 2\Im\langle S^*T\hat{k}_{\lambda}, \hat{k}_{\lambda}\rangle\nonumber\\
    &\ge&\langle T^*T\hat{k}_{\lambda},\hat{k}_{\lambda}\rangle+\langle 
  S^*S\hat{k}_{\lambda},\hat{k}_{\lambda}\rangle- 2|\Im\langle S^*T\hat{k}_{\lambda}, \hat{k}_{\lambda}\rangle|.\label{sad4}
    \end{eqnarray}

    Since, $S$ is invertible so $\sqrt{\langle S^*S\hat{k}_{\lambda},\hat{k}_{\lambda}\rangle}\ge \|S^{-1}\|^{-1}$. This implies that 
    \begin{eqnarray}
         2\sqrt{\langle T^*T\hat{k}_{\lambda},\hat{k}_{\lambda}\rangle}\|S^{-1}\|^{-1}&\le& \langle T^*T\hat{k}_{\lambda},\hat{k}_{\lambda}\rangle+\|S^{-1}\|^{-2}\nonumber\\
         &\le& \langle T^*T\hat{k}_{\lambda},\hat{k}_{\lambda}\rangle+\langle S^*S \hat{k}_{\lambda},\hat{k}_{\lambda}\rangle.\label{sad3}
    \end{eqnarray}
    Now, it follows from the inequalities (\ref{sad4}) and (\ref{sad3}) that 
    \begin{eqnarray*}
        \sqrt{\langle |T|^2\hat{k}_{\lambda},\hat{k}_{\lambda}\rangle}&\le& \|S^{-1}\| \Big( |\Im\langle S^*T\hat{k}_{\lambda}, \hat{k}_{\lambda}\rangle|+  \frac{1}{2}\||T-iS|^2\|_{ber}\Big)\\
        &\le& \|S^{-1}\| \Big(\sin\theta\textbf{ber}( S^*T)+  \frac{1}{2}\||T-iS|^2\|_{ber}\Big).
    \end{eqnarray*}   
    Thus, by taking the supremum over all
    $\lambda\in \Omega$, we obtain the required bound.
\end{proof}
\begin{remark}
    By taking $B=iS$ in \cite[ Th. 2.20]{vietnamani},  the inequality becomes
    \begin{eqnarray}
        \||T|^2\|_{ber}^{\frac{1}{2}}\le\|S^{-1}\| \Big( \textbf{ber}( S^*T)+  \frac{1}{2}\||T-iS|^2\|_{ber}\Big).
    \label{sad5}\end{eqnarray} 
   Since $\sin\theta<1$ for $\theta\in [0,\frac{\pi}{2})$ so Theorem \ref{sad4} yields a better bound than the bound  given in (\ref{sad5}).
\end{remark}
By choosing $S=T^*$ in Theorem \ref{sad4}, we obtain the following corollary.
\begin{cor}
    Let $T\in \mathscr{B(H)}$ and
 $T^2\in \Pi_{\theta}^{\textbf{ber }}$, then 
 \[\||T|^2\|_{ber}^{\frac{1}{2}}\le\|T^{-1}\| \Big(\sin\theta\textbf{ber}( T^2)+  \frac{1}{2}\||T-iT^*|^2\|_{ber}\Big).\]
\end{cor}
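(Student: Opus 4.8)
The plan is to obtain this corollary as a direct specialization of Theorem \ref{sad4}, taking $S = T^*$. First I would verify that this choice of $S$ meets the hypotheses of that theorem. Since the inequality to be proved features $\|T^{-1}\|$, the operator $T$ is tacitly assumed invertible; consequently $S = T^*$ is invertible as well, with $S^{-1} = (T^*)^{-1} = (T^{-1})^*$, and the elementary identity $\|A^*\| = \|A\|$ gives $\|S^{-1}\| = \|T^{-1}\|$. Next, $S^* = (T^*)^* = T$, so $S^*T = T \cdot T = T^2$; hence the assumption $T^2 \in \Pi_{\theta}^{\textbf{ber}}$ is precisely the assumption $S^*T \in \Pi_{\theta}^{\textbf{ber}}$ demanded by Theorem \ref{sad4}. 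Finally, $|T - iS|^2 = |T - iT^*|^2$, so the Berezin-norm term transfers verbatim.

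With these identifications recorded, I would simply substitute $S = T^*$ into the inequality
\[
\||T|^2\|_{ber}^{\frac{1}{2}} \le \|S^{-1}\|\Big(\sin\theta\,\textbf{ber}(S^*T) + \frac{1}{2}\||T - iS|^2\|_{ber}\Big)
\]
furnished by Theorem \ref{sad4}, and rewrite each term using the identities above to arrive at
\[
\||T|^2\|_{ber}^{\frac{1}{2}} \le \|T^{-1}\|\Big(\sin\theta\,\textbf{ber}(T^2) + \frac{1}{2}\||T - iT^*|^2\|_{ber}\Big),
\]
which is the claim. I do not expect any genuine obstacle here: no new estimate is needed, and the only point requiring even momentary attention is the bookkeeping that $S = T^*$ inherits both invertibility and the Berezin-sectoriality hypothesis from $T$, together with the norm identity $\|(T^*)^{-1}\| = \|T^{-1}\|$.
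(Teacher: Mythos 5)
Your proposal is correct and matches the paper's own proof, which likewise obtains the corollary by setting $S=T^*$ in Theorem \ref{sad4}; your additional bookkeeping ($S^*T=T^2$, $\|(T^*)^{-1}\|=\|T^{-1}\|$, $|T-iS|^2=|T-iT^*|^2$) is exactly the verification the paper leaves implicit.
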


\begin{lemma}\cite{potpourri}\label{last30}
    Let $(\mathscr{H};\langle \cdot,\cdot\rangle)$ be a complex inner product space and $x,y\in \mathscr{H}$. Then for any $t\in \mathbb{R}$ the following inequality holds:
    \begin{eqnarray*}
      0\le \|x\|^2\|y\|^2\le\frac{1}{4}\Big(\Im\langle x,y \rangle+\Re\langle x,y \rangle\Big)^2+  \frac{\|y\|^2}{2}\Big(\|x-ty\|^2+\|x-ity\|^2\Big).
    \end{eqnarray*}
\end{lemma}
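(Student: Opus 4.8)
The plan is to prove the inequality by a single parametrized optimization argument. Fix $x,y \in \mathscr{H}$; if $y=0$ the inequality is trivial, so assume $y \neq 0$. First I would expand the two squared norms on the right-hand side: for any $t\in\mathbb{R}$,
\begin{eqnarray*}
\|x-ty\|^2 &=& \|x\|^2 - 2t\,\Re\langle x,y\rangle + t^2\|y\|^2,\\
\|x-ity\|^2 &=& \|x\|^2 - 2t\,\Im\langle x,y\rangle + t^2\|y\|^2,
\end{eqnarray*}
so that
\[
\|x-ty\|^2 + \|x-ity\|^2 = 2\|x\|^2 - 2t\big(\Re\langle x,y\rangle + \Im\langle x,y\rangle\big) + 2t^2\|y\|^2.
\]
Substituting this into the right-hand side, the claimed bound becomes the assertion that for every $t\in\mathbb{R}$,
\[
\|x\|^2\|y\|^2 \le \tfrac14\big(\Im\langle x,y\rangle+\Re\langle x,y\rangle\big)^2 + \|y\|^2\Big(\|x\|^2 - t\big(\Re\langle x,y\rangle+\Im\langle x,y\rangle\big) + t^2\|y\|^2\Big).
\]

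The key step is then to choose $t$ to minimize the right-hand side, or rather to observe that the right-hand side minus the left-hand side is a nonnegative quadratic in $t$. Writing $c = \Re\langle x,y\rangle + \Im\langle x,y\rangle$ for brevity, the difference (RHS $-$ LHS) equals
\[
\tfrac14 c^2 - \|y\|^2 c\, t + \|y\|^4 t^2 = \Big(\|y\|^2 t - \tfrac{c}{2}\Big)^2 \ge 0.
\]
This is manifestly nonnegative for all $t\in\mathbb{R}$ (indeed it vanishes at $t = c/(2\|y\|^2)$), which establishes the desired inequality; the leftmost inequality $0 \le \|x\|^2\|y\|^2$ is trivial. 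I would present the argument by completing the square directly rather than invoking a minimization, since it makes the nonnegativity transparent.

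I do not anticipate a genuine obstacle here: the statement is essentially an exercise in completing the square once the norms are expanded, and no appeal to Cauchy--Schwarz or any deeper inequality is needed (in fact the bound is \emph{weaker} than Cauchy--Schwarz for suitable $t$, but holds for \emph{all} $t$, which is the point). The only mild care required is bookkeeping: tracking the cross terms $\Re\langle x,y\rangle$ and $\Im\langle x,y\rangle$ correctly through the expansion of $\|x-ity\|^2$, noting that $\Re\langle x, ity\rangle = \Re\big(i t\,\overline{\langle x,y\rangle}\big)\cdot(-1)$ works out to $-t\,\Im\langle x,y\rangle$ after the sign from $\|x-ity\|^2 = \|x\|^2 - 2\Re\langle x, ity\rangle + t^2\|y\|^2$. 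Once that sign is pinned down, the completion of the square is immediate and the proof is a few lines.
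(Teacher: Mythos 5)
Your argument is correct, and it is worth noting that the paper itself offers no proof of this lemma at all: it is simply imported from Dragomir's paper \cite{potpourri}, so your completing-the-square derivation supplies a self-contained verification that the paper does not contain. The computation checks out: with the inner product linear in the first slot (the convention the paper uses, since $\phi(\lambda)=\langle\phi,k_\lambda\rangle$ must be linear in $\phi$), one has $\|x-ty\|^2=\|x\|^2-2t\,\Re\langle x,y\rangle+t^2\|y\|^2$ and $\|x-ity\|^2=\|x\|^2-2t\,\Im\langle x,y\rangle+t^2\|y\|^2$, so with $c=\Re\langle x,y\rangle+\Im\langle x,y\rangle$ the difference of the two sides is exactly $\bigl(t\|y\|^2-\tfrac{c}{2}\bigr)^2\ge 0$, with equality at $t=c/(2\|y\|^2)$; the case $y=0$ needs no separate treatment since then $c=0$. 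The only blemish is the parenthetical sign bookkeeping at the end of your write-up, where the intermediate formula for $\Re\langle x,ity\rangle$ is garbled (it equals $+t\,\Im\langle x,y\rangle$, the minus sign entering only through $-2\Re\langle x,ity\rangle$ in the expansion); your displayed expansion of $\|x-ity\|^2$ is nevertheless correct, so the proof stands. It is also worth being aware that the stated inequality, with the combination $\Re+\Im$, is tied to this convention: under the conjugate-linear-in-the-first-slot convention the cross term becomes $\Re-\Im$ and the quadratic in $t$ can dip below zero when $\Re\langle x,y\rangle\,\Im\langle x,y\rangle<0$, so the convention is not a cosmetic point.
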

\begin{theorem}\label{last31} Let $T\in\Pi_{\theta}^{\textbf{Ber}}$ and $t\in \mathbb{R}$. Then
   \[ \||T|^2\|_{ber}\le \frac{1}{4}(\sin\theta+1)^2\textbf{ber}^2(T)+\frac{1}{2}\inf_{t\in \mathbb{R}}\Big(\left\||T-tI|^2\right\|_{ber}+\left\||T-itI|^2\right\|_{ber}\Big).\]
\end{theorem}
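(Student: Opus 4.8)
The plan is to apply Lemma~\ref{last30} pointwise to the vectors $x = T\hat{k}_{\lambda}$ and $y = \hat{k}_{\lambda}$, where $\hat{k}_{\lambda}$ is a normalized reproducing kernel. With this choice $\|y\|^2 = 1$, $\|x\|^2 = \langle T^*T\hat{k}_{\lambda},\hat{k}_{\lambda}\rangle = \langle |T|^2\hat{k}_{\lambda},\hat{k}_{\lambda}\rangle$, and $\langle x,y\rangle = \langle T\hat{k}_{\lambda},\hat{k}_{\lambda}\rangle = \widetilde{T}(\lambda)$, so $\Re\langle x,y\rangle = \langle \Re(T)\hat{k}_{\lambda},\hat{k}_{\lambda}\rangle$ and $\Im\langle x,y\rangle = \langle \Im(T)\hat{k}_{\lambda},\hat{k}_{\lambda}\rangle$. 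Also $\|x-ty\|^2 = \langle |T-tI|^2\hat{k}_{\lambda},\hat{k}_{\lambda}\rangle \le \||T-tI|^2\|_{ber}$, and similarly $\|x-ity\|^2 \le \||T-itI|^2\|_{ber}$. Thus the lemma gives
\[
\langle |T|^2\hat{k}_{\lambda},\hat{k}_{\lambda}\rangle \le \frac{1}{4}\Big(\langle \Im(T)\hat{k}_{\lambda},\hat{k}_{\lambda}\rangle + \langle \Re(T)\hat{k}_{\lambda},\hat{k}_{\lambda}\rangle\Big)^2 + \frac{1}{2}\Big(\||T-tI|^2\|_{ber} + \||T-itI|^2\|_{ber}\Big)
\]
for every $t\in\mathbb{R}$.

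The next step is to control the first term using the Berezin sectoriality hypothesis. Write $a = \langle \Re(T)\hat{k}_{\lambda},\hat{k}_{\lambda}\rangle$ and $b = \langle \Im(T)\hat{k}_{\lambda},\hat{k}_{\lambda}\rangle$, so that $|\widetilde{T}(\lambda)|^2 = a^2 + b^2$. Since $T\in\Pi_{\theta}^{\textbf{Ber}}$, inequality (\ref{bbvv}) from the proof of Lemma~\ref{main} gives $|b| \le \sin\theta\,\textbf{ber}(T)$, and of course $|a| \le |\widetilde{T}(\lambda)| \le \textbf{ber}(T)$. Therefore
\[
(a+b)^2 \le (|a| + |b|)^2 \le \big(\textbf{ber}(T) + \sin\theta\,\textbf{ber}(T)\big)^2 = (1+\sin\theta)^2\,\textbf{ber}^2(T).
\]
Substituting this into the displayed bound yields, for each fixed $t$,
\[
\langle |T|^2\hat{k}_{\lambda},\hat{k}_{\lambda}\rangle \le \frac{1}{4}(1+\sin\theta)^2\,\textbf{ber}^2(T) + \frac{1}{2}\Big(\||T-tI|^2\|_{ber} + \||T-itI|^2\|_{ber}\Big).
\]
Taking the supremum over all $\lambda\in\mathscr{X}$ on the left gives $\||T|^2\|_{ber}$ on the left-hand side (the right-hand side is independent of $\lambda$), and then taking the infimum over $t\in\mathbb{R}$ on the right gives exactly the claimed inequality.

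I do not expect a genuine obstacle here; the argument is a clean two-step combination of Lemma~\ref{last30} with the already-established inequality (\ref{bbvv}). The only point requiring a little care is the ordering of the suprema and infimum: one must first note that for each fixed $t$ the bound holds for all $\lambda$, so $\||T|^2\|_{ber} \le \frac{1}{4}(1+\sin\theta)^2\textbf{ber}^2(T) + \frac{1}{2}(\||T-tI|^2\|_{ber} + \||T-itI|^2\|_{ber})$ for every $t$, and only then pass to the infimum over $t$. One should also double-check the sign conventions in Lemma~\ref{last30}: the cross term there is $\big(\Im\langle x,y\rangle + \Re\langle x,y\rangle\big)^2$, which is symmetric enough that the crude bound $(|a|+|b|)^2$ absorbs it without needing to track signs. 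A remark comparing this with the known bound obtained by dropping the sectoriality hypothesis (i.e., replacing $(1+\sin\theta)^2$ by the constant $4$, valid when $\theta$ can be as large as $\pi/2$) would naturally follow, since $(1+\sin\theta)^2 < 4$ for all $\theta\in[0,\pi/2)$.
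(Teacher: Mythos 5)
Your proof is correct and follows essentially the same route as the paper: apply Lemma~\ref{last30} with $x=T\hat{k}_{\lambda}$, $y=\hat{k}_{\lambda}$, bound the cross term by $(|\Re\widetilde{T}(\lambda)|+|\Im\widetilde{T}(\lambda)|)^2\le(1+\sin\theta)^2\textbf{ber}^2(T)$ using inequality~(\ref{bbvv}), then take the supremum over $\lambda$ and finally the infimum over $t$. Your explicit remark about the order of the supremum and infimum is a point the paper handles in the same way, so there is nothing to correct.
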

\begin{proof}
    Let $\hat{k}_{\lambda}$ be a normalized reproducing kernel of $\mathscr{H}$ and $t\in \mathbb{R}$. Now, choosing $x=T\hat{k}_{\lambda}$ and $y=\hat{k}_{\lambda}$ in Lemma \ref{last30}, we get 
      \begin{eqnarray*}
\|T\hat{k}_{\lambda}\|^2&\le& \frac{1}{4}\Big(\Im\langle T\hat{k}_{\lambda},\hat{k}_{\lambda} \rangle+\Re\langle T\hat{k}_{\lambda},\hat{k}_{\lambda} \rangle\Big)^2+\frac{1}{2}\Big(\|T\hat{k}_{\lambda}-t\hat{k}_{\lambda}\|^2+\|T\hat{k}_{\lambda}-it\hat{k}_{\lambda}\|^2\Big)\\
&\le& \frac{1}{4}\Big(|\Im\langle T\hat{k}_{\lambda},\hat{k}_{\lambda} \rangle|+|\Re\langle T\hat{k}_{\lambda},\hat{k}_{\lambda} \rangle|\Big)^2+  \frac{1}{2}\Big(\||T-tI|^2\|_{ber}+\||T-itI|^2\|_{ber}\Big)\\
&\le& \frac{1}{4}\big(\sin\theta+1\big)^2\textbf{ber}^2(T)+  \frac{1}{2}\Big(\||T-tI|^2\|_{ber}+\||T-itI|^2\|_{ber}\Big).
    \end{eqnarray*}
   Taking supremum over $\lambda\in \mathscr{X}$, we get \begin{eqnarray*}
   \||T|^2\|_{ber}\le \frac{1}{4}(\sin\theta+1)^2\textbf{ber}^2(T)+\frac{1}{2}\Big(\left\||T-tI|^2\right\|_{ber}+\left\||T-itI|^2\right\|_{ber}\Big).    
   \end{eqnarray*} 
   Since the  above inequality holds for all
   $t\in\mathbb{R}$, the required result follows by taking the infimum over $t\in \mathbb{R}$.
\end{proof}

\begin{remark}
    In \cite[Cor 4.1]{garayevlama} it was derived that 
    \begin{eqnarray}
         \||T|^2\|_{ber}\le \frac{1}{2}\textbf{ber}^2(T)+\frac{1}{2}\inf_{t\in \mathbb{R}}\Big(\||T-tI|^2\|_{ber}+\||T-itI|^2\|_{ber}\Big).\label{last32}
    \end{eqnarray} Clearly, if $\theta< \sin^{-1}(\sqrt{2}-1)$ then Theorem \ref{last31}  provides a better bound than that in  (\ref{last32}).
\end{remark}

\section{Power inequalities and Berezin sectorial operators}\label{S5}
For any bounded linear operator $T$, the numerical radius satisfies the power inequality (\cite{halmos}), namely,
 $$w(T^n)\le w(T)^n\,\,\,(n\in \mathbb{N}).$$
 Consequently, for any bounded operator 
 $T$ acting on a reproducing kernel Hilbert space $\mathscr{H}$, we have 
 $$\mbox{\textbf{ber}}(T^n)\le\left(\frac{w(T)}{\mbox{\textbf{ber}}(T)}\right)^n\mbox{\textbf{ber}}^n(T)\,\,\,(n\in \mathbb N).$$
A natural question arising from this is whether the inequality
 $$\mbox{\textbf{ber}}(T^n)\le\mbox{\textbf{ber}}^n(T)\,\,\,(n\in \mathbb N) ,$$ holds in general?

 In \cite[Th. 3]{coburn}, author constructed certain unitary operators on the weighted Bergman spaces and used these to show that the inequality 
$\mbox{\textbf{ber}}(T^2)\le\mbox{\textbf{ber}}^2(T)$ does not hold, and further he showed that it even satisfies the opposite inequality $\mbox{\textbf{ber}}(T^2)>\mbox{\textbf{ber}}^2(T).$
Furthermore, as seen in \cite{garayevlama}, some specific operators belonging to a norm-closed subalgebra also satisfy this power inequality.  In \cite{garayev}, it is also proved that this inequality holds for Toeplitz operators on the Hardy Hilbert space and Bergman space under certain conditions.

In our next resut we prove the power inequality of the Toeplitz operator on the weighted Bergman space for some special conditions. For this, first we recall that
for $\alpha>-1$, $A_\alpha^2(\mathbb{D})$ denotes the weighted Bergman space on the open unit disk consisting of all holomorphic functions such that $\int_{\mathbb{D}}|f(z)|^2dA_{\alpha}(z)<\infty,$ where $dA$ denotes the normalised Lebesgue measure on $\mathbb{D}$ and $dA_{\alpha}(z)=(\alpha+1)(1-|z|^2)^\alpha dA(z).$ The reproducing kernel of the weighted Bergman space at the point $w\in\mathbb{D}$ is $$k^\alpha_w(z)=\frac{1}{(1-\overline{w}z)^{\alpha+2}}.$$\\
Let $P_\alpha$ be the orthogonal projection of $L^2(\mathbb{D},dA_\alpha)$ onto $A_\alpha^2(\mathbb{D}).$
Suppose $L^\infty(\mathbb{D},dA_\alpha)$ is the space of all complex measurable functions $\phi$ on $\mathbb{D}$ such that $$\|\phi\|_{\infty,\alpha}=\sup\left\{c\ge0:A_\alpha(\{z\in\mathbb{D}:|\phi(z)|>c\})>0\right\}<\infty.$$
For $\phi\in L^\infty(\mathbb{D},dA_\alpha),$ the operator $T_\phi$ on $A_{\alpha}^2(\mathbb{D})$ defined by $$T_\phi f=P_\alpha(\phi f),\,\, f\in A_\alpha^2$$ is called Toeplitz operator on $A_\alpha^2$ with symbol $\phi.$
Clearly, $T_\phi$ is a bounded linear operator on $L_\alpha^2(dA_\alpha)$ with $\|T_\phi\|\le\|\phi\|_{\infty,\alpha},$ see \cite{kzhu}.

\begin{proposition} \label{iixx}
    Let $\phi$ be harmonic on $\mathbb{D}$ and $T_{\phi}$ be a Toeplitz operator on $A_{\alpha}^2(\mathbb{D})$. Then for any integer $n\ge 1$, the following inequality holds
    \[\textbf{ber}(T^n_{\phi})\le \textbf{ber}^n(T_{\phi}).\]
\end{proposition}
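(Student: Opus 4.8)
The plan is to exploit the well-known fact that for a harmonic symbol $\phi$ on $\mathbb{D}$, the Berezin transform of the Toeplitz operator $T_\phi$ is simply $\widetilde{T_\phi} = \phi$ itself. Indeed, for harmonic $\phi$ one computes
\[
\widetilde{T_\phi}(\lambda) = \langle T_\phi \hat{k}^\alpha_\lambda, \hat{k}^\alpha_\lambda \rangle = \langle \phi\, \hat{k}^\alpha_\lambda, \hat{k}^\alpha_\lambda \rangle = \int_{\mathbb{D}} \phi(z)\, |\hat{k}^\alpha_\lambda(z)|^2\, dA_\alpha(z) = \phi(\lambda),
\]
the last equality being the sub-mean-value / reproducing property of the Berezin transform on harmonic functions (this is standard, e.g.\ from \cite{kzhu}; one splits $\phi$ into the sum of a holomorphic and an anti-holomorphic part and applies the reproducing property to each). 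Hence $\textbf{Ber}(T_\phi) = \phi(\mathbb{D})$ and $\textbf{ber}(T_\phi) = \sup_{\lambda \in \mathbb{D}} |\phi(\lambda)| = \|\phi\|_\infty$.

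Next I would observe that $T_\phi^n$ need \emph{not} be a Toeplitz operator with a harmonic symbol, so one cannot directly repeat the computation for $T_\phi^n$. Instead I would bound $\textbf{ber}(T_\phi^n)$ from above using $\textbf{ber}(T) \le w(T) \le \|T\|$ together with submultiplicativity of the operator norm:
\[
\textbf{ber}(T_\phi^n) \le \|T_\phi^n\| \le \|T_\phi\|^n.
\]
Then I would invoke the norm estimate $\|T_\phi\| \le \|\phi\|_{\infty,\alpha}$ recalled just before the proposition, and note that for a (bounded) harmonic function the essential supremum with respect to $dA_\alpha$ coincides with the ordinary supremum $\|\phi\|_\infty = \sup_{z\in\mathbb{D}}|\phi(z)|$ (harmonic functions are continuous, and $dA_\alpha$ has full support). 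Combining,
\[
\textbf{ber}(T_\phi^n) \le \|T_\phi\|^n \le \|\phi\|_\infty^n = \Big(\sup_{\lambda\in\mathbb{D}}|\phi(\lambda)|\Big)^n = \textbf{ber}^n(T_\phi),
\]
which is exactly the claimed inequality.

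The main obstacle — really the only nontrivial point — is establishing the identity $\textbf{ber}(T_\phi) = \|\phi\|_\infty$ for harmonic $\phi$, i.e.\ that the Berezin transform fixes bounded harmonic functions and that the resulting supremum equals the genuine sup-norm rather than an a.e.\ quantity. Once that is in place, everything else is a one-line chain through $\textbf{ber} \le \|\cdot\|$, submultiplicativity of the norm, and the norm bound $\|T_\phi\|\le\|\phi\|_{\infty,\alpha}$. I would also remark in passing that this is consistent with, and strictly stronger than, the generic bound $\textbf{ber}(T^n)\le (w(T)/\textbf{ber}(T))^n\,\textbf{ber}^n(T)$ mentioned in the introduction, since here the constant is forced to be $1$; and that the harmonicity hypothesis is what rules out Coburn's counterexample \cite{coburn}, where the symbol is not harmonic.
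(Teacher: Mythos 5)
Your proposal is correct and follows essentially the same route as the paper: use the fact that the Berezin transform fixes bounded harmonic symbols to get $\textbf{ber}(T_\phi)=\|\phi\|_\infty$ (the paper cites this rather than re-deriving it), then chain $\textbf{ber}(T_\phi^n)\le\|T_\phi^n\|\le\|T_\phi\|^n\le\|\phi\|_\infty^n=\textbf{ber}^n(T_\phi)$. The only difference is cosmetic: you keep the inequality $\|T_\phi\|\le\|\phi\|_{\infty,\alpha}$, while the paper writes it as an equality, which in fact also holds here since $\|\phi\|_\infty=\textbf{ber}(T_\phi)\le\|T_\phi\|$.
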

\begin{proof}
   
    Since $\phi$ is harmonic on $\mathbb{D},$ then from \cite[Th. 2.1]{berezin toeplitz}, we have
    $ \widetilde{T_{\phi}}(w)=\phi(w)$ for $w\in\mathbb{D}$ and hence $\textbf{ber}(T_{\phi})=\|\phi\|_{\infty}$.\\
    Now, \[ \textbf{ber}(T_{\phi}^n)\le \|T_{\phi}^n\|\le \|T_{\phi}\|^n=\|\phi\|^n_{\infty}=\left(\sup_{\lambda\in \mathbb{D}}| \widetilde{\phi}(\lambda)|\right)^n=\textbf{ber}(T_{\phi})^n.\]
    So, for any $n\ge 1$ we get $\textbf{ber}(T^n_{\phi})\le \textbf{ber}^n(T_{\phi})$.
\end{proof}

\begin{remark}\label{Re}
    If $\phi$ is harmonic on $\mathbb{D}$ and $T_{\phi}$ is a Toeplitz operator on $A_\alpha^2(\mathbb{D}),$ then we have $\Re(\phi)$ is harmonic and the real part of $T_{\phi}$ is also a Toeplitz operator on $A_\alpha^2(\mathbb{D})$. Hence by Proposition \ref{iixx}, we obtain  
    \begin{eqnarray*}
        \textbf{ber}^n\left(\Re(T_{\phi})\right)\le \textbf{ber}^n\left(\Re(T_{\phi})\right).
    \end{eqnarray*}for any integer $n\ge 1$. Similarly, the same argument follows for the imaginary part of $T_{\phi}.$
\end{remark}

Here, we establish several inequalities for the Berezin number and Berezin norm of Berezin sectorial operators whose real and imaginary parts satisfy the power inequality of the Berezin number.
The collection of all such operators is denoted by $\Pi_{\theta}^{\textbf{Ber},P}$ and is defined as
 \begin{eqnarray*}
\Pi_{\theta}^{\textbf{Ber},P}
&=&\Big\{ T\in \Pi_{\theta}^{\textbf{ber }} : \textbf{ber}\left(\Re^n(T)\right)\le \textbf{ber}^n\left (\Re(T)\right),\\
&&\,\,\,\,\,\,\,\,\,\,\textbf{ber}\left(\Im^n(T)\right)\le \textbf{ber}^n \left(\Im(T)\right)\,\, \forall n\in\mathbb{N}\Big\}.     
 \end{eqnarray*}
Clearly, if $\phi$ is harmonic on $\mathbb{D}$ and $M \in \mathbb R$ satisfies $\|\phi\|_{\infty}<M,$ then 
$T_{\phi+M}\in \Pi_{\theta}^{\textbf{Ber},P}.$
\begin{lemma}

\label{a*a+aa*}
    Let $T\in \Pi_{\theta}^{\textbf{Ber},P}$ with $\theta \neq 0$. Then 
    \[  \textbf{ber}^2(T)\ge \frac{\csc^2 \theta}{4}\left\|T^*T+TT^*\right\|_{ber}+\frac{\csc^2\theta}{2}\Big(\textbf{ber}^2(\Im(T))-\textbf{ber}^2(\Re(T))\Big).\]
\end{lemma}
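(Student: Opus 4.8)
The plan is to push everything down to the self-adjoint pieces of the Cartesian decomposition. Write $A=\Re(T)$ and $B=\Im(T)$, so $A,B$ are self-adjoint with $T=A+iB$. Expanding directly gives the identity $T^{*}T+TT^{*}=2(A^{2}+B^{2})$, and since $A^{2}+B^{2}$ is a positive operator, the equality $\|P\|_{ber}=\textbf{ber}(P)$ for positive $P$ (proved in \cite{pintuani}) together with $\langle A^{2}\hat k_{\lambda},\hat k_{\lambda}\rangle=\|A\hat k_{\lambda}\|^{2}$ yields
\[
\|T^{*}T+TT^{*}\|_{ber}=2\,\textbf{ber}(A^{2}+B^{2})=2\sup_{\lambda\in\mathscr{X}}\Big(\|A\hat k_{\lambda}\|^{2}+\|B\hat k_{\lambda}\|^{2}\Big).
\]

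Next I would record the two facts that drive the estimate. First, squaring the conclusion of Lemma \ref{main} (both sides being nonnegative) gives
\[
\textbf{ber}^{2}(T)\ge \csc^{2}\theta\,\textbf{ber}^{2}(\Im(T)),
\]
which is also immediate from the pointwise inequality $|\widetilde T(\lambda)|^{2}=\langle A\hat k_{\lambda},\hat k_{\lambda}\rangle^{2}+\langle B\hat k_{\lambda},\hat k_{\lambda}\rangle^{2}\ge\csc^{2}\theta\,\langle B\hat k_{\lambda},\hat k_{\lambda}\rangle^{2}$ used in \eqref{bbvv}. Second, by subadditivity of the supremum and the definition of the Berezin number of the positive operators $A^{2}$ and $B^{2}$,
\[
\sup_{\lambda\in\mathscr{X}}\Big(\|A\hat k_{\lambda}\|^{2}+\|B\hat k_{\lambda}\|^{2}\Big)\le \textbf{ber}(A^{2})+\textbf{ber}(B^{2})\le \textbf{ber}^{2}(A)+\textbf{ber}^{2}(B),
\]
where the last inequality is exactly the defining property of $T\in\Pi_{\theta}^{\textbf{Ber},P}$ applied with $n=2$ to $\Re(T)$ and $\Im(T)$. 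Combining with the first display, $\|T^{*}T+TT^{*}\|_{ber}\le 2\big(\textbf{ber}^{2}(\Re(T))+\textbf{ber}^{2}(\Im(T))\big)$.

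Finally I would substitute this bound into the right-hand side of the claimed inequality to get
\[
\frac{\csc^{2}\theta}{4}\|T^{*}T+TT^{*}\|_{ber}+\frac{\csc^{2}\theta}{2}\Big(\textbf{ber}^{2}(\Im(T))-\textbf{ber}^{2}(\Re(T))\Big)\le \csc^{2}\theta\,\textbf{ber}^{2}(\Im(T)),
\]
after which $\textbf{ber}^{2}(T)\ge\csc^{2}\theta\,\textbf{ber}^{2}(\Im(T))$ from the second paragraph closes the argument. The computation is short, and there is no serious obstacle; the one point requiring care is that the middle term $\tfrac14\|T^{*}T+TT^{*}\|_{ber}$ can be replaced by the \emph{separated} quantity $\tfrac12(\textbf{ber}^{2}(\Re(T))+\textbf{ber}^{2}(\Im(T)))$ only because of the positivity identity for $\|\cdot\|_{ber}$ and the $n=2$ power hypothesis built into $\Pi_{\theta}^{\textbf{Ber},P}$ — it is precisely this pair of ingredients that makes the right-hand side telescope down to $\csc^{2}\theta\,\textbf{ber}^{2}(\Im(T))$, which is dominated by $\textbf{ber}^{2}(T)$.
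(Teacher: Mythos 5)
Your proposal is correct: every step checks out, and it uses exactly the same four ingredients as the paper's proof — Lemma \ref{main}, the $n=2$ case of the power hypothesis in $\Pi_{\theta}^{\textbf{Ber},P}$, subadditivity of $\textbf{ber}$, and the identity $T^*T+TT^*=2\big(\Re^2(T)+\Im^2(T)\big)$ combined with the equality of Berezin number and Berezin norm for positive operators. The difference is organizational. The paper argues forward: it starts from $\textbf{ber}^2(T)\ge\max\{\textbf{ber}^2(\Re(T)),\csc^2\theta\,\textbf{ber}^2(\Im(T))\}$, rewrites the max as an average plus half an absolute difference, and then pushes this down through the power hypothesis and subadditivity until the quantity $\frac{\csc^2\theta}{4}\|T^*T+TT^*\|_{ber}$ appears. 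You argue backward: you bound $\|T^*T+TT^*\|_{ber}\le 2\big(\textbf{ber}^2(\Re(T))+\textbf{ber}^2(\Im(T))\big)$, observe that the whole right-hand side of the lemma then telescopes to at most $\csc^2\theta\,\textbf{ber}^2(\Im(T))$, and finish with the squared form of Lemma \ref{main}. Your route is shorter and avoids the max/absolute-value bookkeeping (where the paper's write-up in fact has some $\csc\theta$ versus $\csc^2\theta$ slips), and it makes explicit the somewhat deflating fact that, under the $\Pi_{\theta}^{\textbf{Ber},P}$ hypothesis, the stated lower bound never exceeds $\csc^2\theta\,\textbf{ber}^2(\Im(T))$; the paper's forward chain keeps the $\Re$/$\Im$ information in the form actually reused later (e.g.\ in Theorem \ref{axb+bya}), but proves nothing stronger.
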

\begin{proof}    Let $T\in \Pi_{\theta}^{\textbf{Ber},P}$. Then from Lemma \ref{main} we get
      \begin{eqnarray*}
        &&\textbf{ber}^2(T)\\
        &\ge& \max\left\{\textbf{ber}^2(\Re(T)), \csc^2\theta\textbf{ber}^2(\Im(T))\right\}\\
        &=& \frac{\textbf{ber}^2(\Re(T))+ \csc^2\theta\textbf{ber}^2(\Im(T))}{2}+\frac{\left|\textbf{ber}^2(\Re(T))-\csc^2\theta\textbf{ber}^2(\Im(T))\right|}{2}\\
        &=& \frac{\csc^2 \theta}{2}\left(\textbf{ber}^2(\Re(T))+\textbf{ber}^2(\Im(T))\right)+ \frac{(1-\csc\theta)}{2}\textbf{ber}^2(\Re(T))\\
&&\,\,\,\,\,\,\,\,\,\,\,\,\,\,\,\,\,\,\,\,\,\,\,\,\,\,\,\,\,\,\,\,\,\,\,\,\,+\frac{1}{2}\left|\textbf{ber}^2(\Re(T))-\csc\theta\textbf{ber}^2(\Im(T))\right|\\
 &\ge& \frac{\csc^2 \theta}{2}\Big(\textbf{ber}(\Re^2(T))+\textbf{ber}(\Im^2(T))\Big)+ \frac{(1-\csc\theta)}{2}\textbf{ber}^2(\Re(T))\\
&&\,\,\,\,\,\,\,\,\,\,\,\,\,\,\,\,\,\,\,\,\,\,\,\,\,\,\,\,\,\,\,\,\,\,\,\,\,+\frac{1}{2}\left|\textbf{ber}^2(\Re(T))-\csc\theta\textbf{ber}^2(\Im(T))\right|\\
     &\ge& \frac{\csc^2 \theta}{2}\textbf{ber}(\Re^2(T)+\Im^2(T))-\frac{\csc^2\theta}{2}\textbf{ber}^2(\Re(T))\\
&&\,\,\,\,\,\,\,\,\,\,\,\,\,\,\,\,\,\,\,\,\,\,\,\,\,\,\,\,\,\,\,\,\,\,\,\,\,+\frac{1}{2}\left|\textbf{ber}^2(\Re(T))-\big(\textbf{ber}^2(\Re(T))-\csc^2\theta\textbf{ber}^2(\Im(T))\big)\right|\\
    &=& \frac{\csc^2 \theta}{4}\left\|T^*T+TT^*\right\|_{ber}+\frac{\csc^2\theta}{2}\Big(\textbf{ber}^2(\Im(T))-\textbf{ber}^2(\Re(T))\Big).
    \end{eqnarray*} 
    This completes the proof.
\end{proof} 
By applying Lemma \ref{a*a+aa*}, we establish upper bounds for the Berezin number of generalized commutator and anti-commutator operators. Recall that the operator $ST-TS$
 represents the commutator, while 
$ST+TS$ represents the anti-commutator.
We now observe the following scalar  inequality
\begin{eqnarray}
    \big(ab+cd\big)^2&\le &  \big(a^2+c^2 \big) \big(b^2+d^2 \big),\,\,\,\,\,\mbox{ where $a,b,c,d\in\mathbb{R}$.}\label{ab+cd}
\end{eqnarray}
Using this inequality, the following result is obtained.

\begin{theorem}\label{axb+bya}
  Let $S,T,X,Y\in \mathscr{B(H)}$  where $S\in \Pi_{\theta}^{\textbf{Ber},P}$  with $\theta \neq 0$. Then
  \begin{eqnarray*}
      &&\textbf{ber}(SXT\pm TYS )\\
     &\le& 2\sin \theta\left\|T^*|X|^2T+T|Y^*|^2T^*\right\|_{ber}^{\frac{1}{2}} \left( \textbf{ber}^2(S)-\frac{\csc^2\theta}{2}\left(\textbf{ber}^2(\Im(S))-\textbf{ber}^2(\Re(S))\right)\right)^{\frac{1}{2}}.
  \end{eqnarray*}
\end{theorem}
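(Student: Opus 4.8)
The plan is to estimate the quantity $\langle (SXT \pm TYS)\hat{k}_\lambda, \hat{k}_\lambda\rangle$ directly in terms of a product of two inner products, apply the scalar inequality \eqref{ab+cd}, and then invoke Lemma \ref{a*a+aa*} to control the factor coming from $S$. First I would fix a normalized reproducing kernel $\hat{k}_\lambda$ and write
\[
\big|\langle (SXT\pm TYS)\hat{k}_\lambda,\hat{k}_\lambda\rangle\big|
\le \big|\langle XT\hat{k}_\lambda, S^*\hat{k}_\lambda\rangle\big| + \big|\langle YS\hat{k}_\lambda, T^*\hat{k}_\lambda\rangle\big|,
\]
and then apply Cauchy--Schwarz to each term, obtaining a bound of the form
\[
\|XT\hat{k}_\lambda\|\,\|S^*\hat{k}_\lambda\| + \|T^*\hat{k}_\lambda\|\,\|YS\hat{k}_\lambda\|
= \|S^*\hat{k}_\lambda\|\,\|XT\hat{k}_\lambda\| + \|T^* \hat{k}_\lambda\|\,\|Y^*{}^*S\hat{k}_\lambda\|.
\]
To make this symmetric in $S$, I would instead pair so that each term carries an $\|S\hat{k}_\lambda\|$ or $\|S^*\hat{k}_\lambda\|$: the natural grouping is $\langle XT\hat{k}_\lambda, S^*\hat{k}_\lambda\rangle$ and $\langle T^*\hat{k}_\lambda, (YS)^*\hat{k}_\lambda\rangle = \langle T^*\hat{k}_\lambda, S^*Y^*\hat{k}_\lambda\rangle$, which after Cauchy--Schwarz gives
\[
\|S^*\hat{k}_\lambda\|\,\|XT\hat{k}_\lambda\| + \|S\hat{k}_\lambda\|\, \|Y^*T^*\hat{k}_\lambda\|.
\]
Hmm --- this leaves an asymmetry between $S$ and $S^*$; to land exactly on the stated form I would instead combine $\|S\hat{k}_\lambda\|^2 + \|S^*\hat{k}_\lambda\|^2 = \langle (S^*S + SS^*)\hat{k}_\lambda,\hat{k}_\lambda\rangle$ after applying \eqref{ab+cd} with $a = \|S\hat{k}_\lambda\|$, $c = \|S^*\hat{k}_\lambda\|$, $b = \|Y^*T^*\hat{k}_\lambda\|$, $d = \|XT\hat{k}_\lambda\|$.

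Carrying that out: after Cauchy--Schwarz and the grouping above, the scalar inequality \eqref{ab+cd} yields
\[
\big|\langle (SXT\pm TYS)\hat{k}_\lambda,\hat{k}_\lambda\rangle\big|^2
\le \big(\|S\hat{k}_\lambda\|^2 + \|S^*\hat{k}_\lambda\|^2\big)\big(\|Y^*T^*\hat{k}_\lambda\|^2 + \|XT\hat{k}_\lambda\|^2\big).
\]
The first factor equals $\langle (S^*S+SS^*)\hat{k}_\lambda,\hat{k}_\lambda\rangle \le \|S^*S+SS^*\|_{ber}$, and by Lemma \ref{a*a+aa*} this is bounded by
\[
\frac{4}{\csc^2\theta}\left(\textbf{ber}^2(S) - \frac{\csc^2\theta}{2}\big(\textbf{ber}^2(\Im(S))-\textbf{ber}^2(\Re(S))\big)\right) = 4\sin^2\theta\left(\textbf{ber}^2(S) - \cdots\right).
\]
The second factor equals $\langle (T^*|X|^2 T + T|Y^*|^2 T^*)\hat{k}_\lambda, \hat{k}_\lambda\rangle \le \|T^*|X|^2T + T|Y^*|^2T^*\|_{ber}$, using $\|Y^*T^*\hat{k}_\lambda\|^2 = \langle T|Y^*|^2 T^*\hat{k}_\lambda,\hat{k}_\lambda\rangle$ and $\|XT\hat{k}_\lambda\|^2 = \langle T^*|X|^2 T\hat{k}_\lambda,\hat{k}_\lambda\rangle$. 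Multiplying the two bounds, taking square roots, and then taking the supremum over $\lambda \in \mathscr{X}$ gives exactly the claimed inequality, with the factor $2\sin\theta$ emerging as $\sqrt{4\sin^2\theta}$.

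The one genuine subtlety --- and the step I expect to require the most care --- is choosing the correct pairing in the Cauchy--Schwarz step so that the $S$-dependence collapses precisely into $\|S\hat{k}_\lambda\|^2 + \|S^*\hat{k}_\lambda\|^2$ (hence into $\|S^*S+SS^*\|_{ber}$, which is what Lemma \ref{a*a+aa*} bounds) while the remaining factor organizes into $T^*|X|^2T + T|Y^*|^2T^*$. One must split $SXT$ as an inner product pairing $XT\hat{k}_\lambda$ against $S^*\hat{k}_\lambda$, and $TYS$ as pairing $S\hat{k}_\lambda$ against $Y^*T^*\hat{k}_\lambda$; any other grouping produces $\|S\hat{k}_\lambda\|^2$ twice or $\|S^*\hat{k}_\lambda\|^2$ twice and does not match. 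Everything else --- the triangle inequality, Cauchy--Schwarz, the elementary inequality \eqref{ab+cd}, the identities $\|A\hat{k}_\lambda\|^2 = \langle A^*A\hat{k}_\lambda,\hat{k}_\lambda\rangle$, and passing to the supremum --- is routine. Note also the hypothesis $S \in \Pi_\theta^{\textbf{Ber},P}$ is used only through Lemma \ref{a*a+aa*}, which is why it is imposed on $S$ rather than on $X$, $Y$, or $T$.
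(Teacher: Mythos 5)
Your proposal is correct and is essentially the paper's own proof: triangle inequality, Cauchy--Schwarz with the pairing $\langle XT\hat{k}_{\lambda},S^{*}\hat{k}_{\lambda}\rangle$ and $\langle S\hat{k}_{\lambda},Y^{*}T^{*}\hat{k}_{\lambda}\rangle$, the scalar inequality \eqref{ab+cd}, passage to the Berezin norms $\|S^{*}S+SS^{*}\|_{ber}$ and $\|T^{*}|X|^{2}T+T|Y^{*}|^{2}T^{*}\|_{ber}$, Lemma \ref{a*a+aa*} to turn the first factor into $2\sin\theta\big(\textbf{ber}^2(S)-\tfrac{\csc^{2}\theta}{2}(\textbf{ber}^{2}(\Im(S))-\textbf{ber}^{2}(\Re(S)))\big)^{1/2}$, and a supremum over $\lambda$. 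The only blemish is the intermediate identity $\langle TYS\hat{k}_{\lambda},\hat{k}_{\lambda}\rangle=\langle T^{*}\hat{k}_{\lambda},S^{*}Y^{*}\hat{k}_{\lambda}\rangle$, which is false as written (the right-hand side equals $\langle YST^{*}\hat{k}_{\lambda},\hat{k}_{\lambda}\rangle$), but the bound you actually carry forward, $\|S\hat{k}_{\lambda}\|\,\|Y^{*}T^{*}\hat{k}_{\lambda}\|$, comes from the correct pairing $\langle S\hat{k}_{\lambda},(TY)^{*}\hat{k}_{\lambda}\rangle$ that you state in your closing summary, so the argument goes through unchanged.
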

\begin{proof}

  Let $\hat{k}_{\lambda}$ be a normalized reproducing kernel of $\mathscr{H}$. Then we have
\begin{eqnarray}
    \big|\langle (SXT\pm TYS ) \hat{k}_{\lambda},\hat{k}_{\lambda} \rangle\big|&\le&|\langle SXT \hat{k}_{\lambda},\hat{k}_{\lambda}\rangle|+|\langle T YS \hat{k}_{\lambda},\hat{k}_{\lambda} \rangle|\nonumber \label{gpl01}\\
    &=& |\langle XT \hat{k}_{\lambda},S^*\hat{k}_{\lambda}\rangle|+|\langle S \hat{k}_{\lambda},(TY)^*\hat{k}_{\lambda} \rangle|.\end{eqnarray}
   Now, by applying the Cauchy-Schwarz inequality together with the inequality (\ref{ab+cd}), we obtain
    \begin{eqnarray}
    &&|\langle XT \hat{k}_{\lambda},S^*\hat{k}_{\lambda}\rangle|+|\langle S \hat{k}_{\lambda},(TY)^*\hat{k}_{\lambda} \rangle|\nonumber\\
    &&\le\langle SS^* \hat{k}_{\lambda},\hat{k}_{\lambda} \rangle^{\frac{1}{2}} \langle T^*|X|^2T\hat{k}_{\lambda},\hat{k}_{\lambda} \rangle^{\frac{1}{2}} +\langle S^*S \hat{k}_{\lambda},\hat{k}_{\lambda} \rangle^{\frac{1}{2}} \langle T|Y^*|^2T^* \hat{k}_{\lambda},\hat{k}_{\lambda} \rangle^{\frac{1}{2}}\nonumber \\
    &&\le \big\langle (S^*S+SS^*)\hat{k}_{\lambda},\hat{k}_{\lambda}\big\rangle^{\frac{1}{2}}\big\langle (T^*|X|^2T+T|Y^*|^2T^*)\hat{k}_{\lambda},\hat{k}_{\lambda}\big\rangle^{\frac{1}{2}}\nonumber\\
    &&\le\left \|S^*S+SS^*\right\|_{ber}^{\frac{1}{2}}\left\|T^*|X|^2T+T|Y^*|^2T^*\right\|_{ber}^{\frac{1}{2}}\nonumber\\
    &&\le 2\sin \theta\left\|T^*|X|^2T+T|Y^*|^2T^*\right\|_{ber}^{\frac{1}{2}}\nonumber\\
    &&\Big( \textbf{ber}^2(S)-\frac{\csc^2\theta}{2}\left(\textbf{ber}^2(\Im(S))-\textbf{ber}^2(\Re(S))\right)\Big)^{\frac{1}{2}}\,\big(\mbox{by  Lemma \ref{a*a+aa*}}\big).\label{gpl02}
\end{eqnarray} Hence, from the inequalities (\ref{gpl01}) and (\ref{gpl02}) we have 
\begin{eqnarray*}
        \big|\langle (SXT\pm TYS ) \hat{k}_{\lambda},\hat{k}_{\lambda} \rangle\big|&\le&2\sin \theta\left\|T^*|X|^2T+T|Y^*|^2T^*\right\|_{ber}^{\frac{1}{2}} \nonumber\\
    &&\Big( \textbf{ber}^2(S)-\frac{\csc^2\theta}{2}\left(\textbf{ber}^2(\Im(S))-\textbf{ber}^2(\Re(S))\right)\Big)^{\frac{1}{2}}.
\end{eqnarray*}
   By taking the supremum over all $\lambda\in\mathscr{X}$, the desired inequality is obtained.
\end{proof}
The following corollary is obtained from Theorem \ref{axb+bya} by setting $X=Y=I$.
\begin{cor}\label{ab+ban1}
     Let $S,T\in \mathscr{B(H)}$ where $S\in \Pi_{\theta}^{\textbf{Ber},P}$  with $\theta \neq 0$. Then
  \begin{eqnarray*}
      &&\textbf{ber}(ST\pm TS )\\
     &\le&2\sin \theta\left\|T^*T+TT^*\right\|_{ber}^{\frac{1}{2}} \left( \textbf{ber}^2(S)-\frac{\csc^2\theta}{2}\left(\textbf{ber}^2(\Im(S))-\textbf{ber}^2(\Re(S))\right)\right)^{\frac{1}{2}}.
  \end{eqnarray*}
\end{cor}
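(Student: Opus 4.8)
The plan is to obtain this statement as a direct specialization of Theorem \ref{axb+bya}. Since $S\in\Pi_{\theta}^{\textbf{Ber},P}$ with $\theta\neq 0$, the hypotheses of Theorem \ref{axb+bya} are met for \emph{any} choice of $X,Y\in\mathscr{B(H)}$, so I would simply insert $X=Y=I$ into that theorem and simplify the two operator expressions appearing in its conclusion. Concretely, $SXT\pm TYS$ becomes $ST\pm TS$, while (using the convention $|A|=(A^*A)^{\frac12}$) one has $|X|^2=|I|^2=I$ and $|Y^*|^2=|I^*|^2=I$, hence $T^*|X|^2T+T|Y^*|^2T^*=T^*T+TT^*$. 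Substituting these identifications into the bound furnished by Theorem \ref{axb+bya} produces exactly
\[
\textbf{ber}(ST\pm TS)\le 2\sin\theta\,\bigl\|T^*T+TT^*\bigr\|_{ber}^{\frac12}\Bigl(\textbf{ber}^2(S)-\tfrac{\csc^2\theta}{2}\bigl(\textbf{ber}^2(\Im(S))-\textbf{ber}^2(\Re(S))\bigr)\Bigr)^{\frac12},
\]
which is the assertion.

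Thus the only steps are: (i) verify that $X=Y=I$ is an admissible choice in Theorem \ref{axb+bya} (it is, since $X,Y$ there are unrestricted bounded operators); (ii) carry out the trivial algebraic reductions $|I|^2=I$, $SIT=ST$, $TIS=TS$; and (iii) read off the conclusion. There is no real obstacle here — the substantive content has already been absorbed into the proof of Theorem \ref{axb+bya}, which in turn relies on Lemma \ref{a*a+aa*}, the Cauchy--Schwarz inequality, and the elementary scalar inequality \eqref{ab+cd}. The corollary is therefore a one-line consequence and requires no further machinery.
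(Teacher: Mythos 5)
Your proposal is correct and coincides exactly with the paper's argument: the corollary is obtained from Theorem \ref{axb+bya} by the specialization $X=Y=I$, under which $SXT\pm TYS=ST\pm TS$ and $T^*|X|^2T+T|Y^*|^2T^*=T^*T+TT^*$. No further justification is needed.
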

\begin{remark}

 In \cite[Th. 4]{blockpintu}, the following inequality was established:
 \begin{eqnarray*}
     \textbf{ber}^p(AB^*\pm CD^*)\le \frac{1}{2}\left\|(AA^*+CC^*)^p+(BB^*+DD^*)^p\right\|_{ber},\,\,\,\forall\,\, p\ge 1,
 \end{eqnarray*} where $A,B,C,D\in\mathscr{B(H)}$. Now, if we consider $A=S$, $B=T^*$, $C=T$, $D=S^*$ and $p=1$ in the above inequality then it becomes
 \begin{eqnarray}
     \textbf{ber}(ST\pm TS)
    & \le& \frac{1}{2}\left\|T^*T+S^*S+TT^*+SS^*\right\|_{ber}.\label{example}
 \end{eqnarray}
  Consider $S=\begin{pmatrix}
    1+i0.2 & 0\\
    0 & 2+i0.5
\end{pmatrix}$ and $T=\begin{pmatrix}
    0.7 & 0\\
    0 & 0
\end{pmatrix}$. It is clear that $S\in \Pi_{\frac{\pi}{12}}^{\textbf{Ber},P} $. Then using the  inequality (\ref{example}), we obtain $  \textbf{ber}(ST\pm TS )\le 4.25$,
whereas Corollary \ref{ab+ban1} provides a sharper estimate, namely $  \textbf{ber}(ST\pm TS )\le 2.91$. Therefore, Corollary \ref{ab+ban1} yields a tighter bound than inequality (\ref{example}).
\end{remark}
Applying Corollary \ref{ab+ban1}, we arrive at the following result.

\begin{cor}
      Let $S,T\in  \Pi_{\theta}^{\textbf{Ber},P}$  with $\theta \neq 0$. Then
      \begin{eqnarray*}
          \textbf{ber}(ST\pm TS )\le \min\{\beta_1,\beta_2\},
      \end{eqnarray*}
      where \begin{eqnarray*}
          \beta_1 \le 2\sin \theta\left\|T^*T+TT^*\right\|_{ber}^{\frac{1}{2}}\left( \textbf{ber}^2(S)-\frac{\csc^2\theta}{2}\left(\|\Im(S)\|_{ber}^2-\|\Re(S)\|_{ber}^2\right)\right)^{\frac{1}{2}}
      \end{eqnarray*}
      and \begin{eqnarray*}
          \beta_2 \le 2\sin \theta\left\|S^*S+SS^*\right\|_{ber}^{\frac{1}{2}} \left( \textbf{ber}^2(T)-\frac{\csc^2\theta}{2}\left(\|\Im(T)\|_{ber}^2-\|\Re(T)\|_{ber}^2\right)\right)^{\frac{1}{2}}.
      \end{eqnarray*}
\end{cor}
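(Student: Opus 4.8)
The plan is to apply Corollary \ref{ab+ban1} twice, once to the pair $(S,T)$ and once to the pair $(T,S)$, exploiting the fact that the anti-commutator $ST+TS$ and the commutator $ST-TS$ (up to sign) are symmetric under interchanging the two operators. Since $S \in \Pi_\theta^{\textbf{Ber},P}$, Corollary \ref{ab+ban1} applied with $S$ playing the role of the ``sectorial'' operator and $T$ arbitrary gives
\[
\textbf{ber}(ST\pm TS)\le 2\sin\theta\left\|T^*T+TT^*\right\|_{ber}^{\frac12}\left(\textbf{ber}^2(S)-\frac{\csc^2\theta}{2}\left(\textbf{ber}^2(\Im(S))-\textbf{ber}^2(\Re(S))\right)\right)^{\frac12},
\]
which is precisely the bound $\beta_1$ (after replacing $\textbf{ber}(\Im(S)),\textbf{ber}(\Re(S))$ by $\|\Im(S)\|_{ber},\|\Re(S)\|_{ber}$, noting these are equal by definition since the Berezin number of a self-adjoint operator equals its Berezin norm — or simply keeping the bound as stated with the $\le$ sign the corollary already provides).

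Next I would observe that since $T\in\Pi_\theta^{\textbf{Ber},P}$ as well, I may swap the roles: apply Corollary \ref{ab+ban1} with $T$ as the sectorial operator and $S$ arbitrary. This yields
\[
\textbf{ber}(TS\pm ST)\le 2\sin\theta\left\|S^*S+SS^*\right\|_{ber}^{\frac12}\left(\textbf{ber}^2(T)-\frac{\csc^2\theta}{2}\left(\textbf{ber}^2(\Im(T))-\textbf{ber}^2(\Re(T))\right)\right)^{\frac12},
\]
which is the bound $\beta_2$. The only point requiring care here is that $ST+TS = TS+ST$ identically, and $ST-TS = -(TS-ST)$, so $\textbf{ber}(ST-TS)=\textbf{ber}(TS-ST)$ since the Berezin number is insensitive to multiplication by $-1$; hence both $\beta_1$ and $\beta_2$ are genuine upper bounds for the same quantity $\textbf{ber}(ST\pm TS)$ in each sign case.

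Finally, since both $\beta_1$ and $\beta_2$ bound $\textbf{ber}(ST\pm TS)$ from above, so does their minimum, giving $\textbf{ber}(ST\pm TS)\le\min\{\beta_1,\beta_2\}$. I anticipate no serious obstacle: the entire argument is a two-fold invocation of the already-established Corollary \ref{ab+ban1} together with the trivial symmetry observations about commutators and anti-commutators. The only mild subtlety is bookkeeping the $\pm$ signs and confirming that the hypothesis $S,T\in\Pi_\theta^{\textbf{Ber},P}$ legitimately lets each operator be used in the ``sectorial slot'' of the corollary — but this is immediate from the definition of $\Pi_\theta^{\textbf{Ber},P}$.
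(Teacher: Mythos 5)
Your proposal is correct and is essentially the paper's own argument: the paper proves this corollary in one line by applying Corollary \ref{ab+ban1} twice with the roles of $S$ and $T$ interchanged, exactly as you do, using the symmetry $ST+TS=TS+ST$ and $\textbf{ber}(ST-TS)=\textbf{ber}(TS-ST)$. Your additional remark about reconciling $\textbf{ber}(\Re(S)),\textbf{ber}(\Im(S))$ with the Berezin norms appearing in the statement is a reasonable bookkeeping point that the paper itself glosses over.
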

\begin{proof}
    The proof follows directly from Corollary \ref{ab+ban1} by interchanging $S$ and $T$.
\end{proof}
Next, we derive Berezin number inequalities for the sum and product of commuting operators in 
$\Pi_{\theta}^{\textbf{Ber},P}$. To begin, we recall the following scalar inequality
\begin{eqnarray}
    \left(\sum_{j=1}^n a_jb_j\right)^2\le\left(\sum_{i=1}^n a_j^2\right)\left(\sum_{i=1}^n b_j^2\right),\,\,\,\,\,\mbox{ where $a_j,b_j\in\mathbb{R}$.}\label{gencauchy}
\end{eqnarray}
 Applying the above inequalities, we prove the following theorem.

\begin{theorem}\label{commutating} Let $S_j,T_j\in \Pi_{\theta}^{\textbf{Ber},P}$, $j=1,2,\cdots n$  be such that $S_jT_j=T_jS_j$. Then
   \begin{eqnarray*}
       \textbf{ber}^2\left(\sum_{j=1}^{n} S_jT_j\right)\le\left (1+\sin^2\theta\right)^2\left(\sum_{j=1}^n\textbf{ber}^2(S_j)\right)\left(\sum_{j=1}^n\textbf{ber}^2(T_j)\right).
   \end{eqnarray*}
\end{theorem}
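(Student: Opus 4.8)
The plan is to prove a pointwise estimate at each normalized reproducing kernel $\hat{k}_{\lambda}$ and then sum over $j$ by means of the scalar Cauchy--Schwarz inequality \eqref{gencauchy}. Fix $\hat{k}_{\lambda}$ and consider a single index $j$. The commutativity hypothesis enters as follows: since $S_jT_j=T_jS_j$ we have $\langle S_jT_j\hat{k}_{\lambda},\hat{k}_{\lambda}\rangle=\langle T_jS_j\hat{k}_{\lambda},\hat{k}_{\lambda}\rangle$, so estimating this one scalar by Cauchy--Schwarz in two different ways and multiplying gives
\[
\big|\langle S_jT_j\hat{k}_{\lambda},\hat{k}_{\lambda}\rangle\big|^2=\big|\langle T_j\hat{k}_{\lambda},S_j^*\hat{k}_{\lambda}\rangle\big|\,\big|\langle S_j\hat{k}_{\lambda},T_j^*\hat{k}_{\lambda}\rangle\big|\le\big(\|S_j\hat{k}_{\lambda}\|\,\|S_j^*\hat{k}_{\lambda}\|\big)\big(\|T_j\hat{k}_{\lambda}\|\,\|T_j^*\hat{k}_{\lambda}\|\big).
\]
The point of this symmetrization is that the right-hand side is now genuinely the product of an ``$S_j$-quantity'' and a ``$T_j$-quantity'', with no mixed terms.

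Next I would estimate the two factors separately. Using the arithmetic--geometric mean inequality together with the Cartesian identity $S_j^*S_j+S_jS_j^*=2(\Re^2(S_j)+\Im^2(S_j))$,
\[
\|S_j\hat{k}_{\lambda}\|\,\|S_j^*\hat{k}_{\lambda}\|\le\tfrac12\langle (S_j^*S_j+S_jS_j^*)\hat{k}_{\lambda},\hat{k}_{\lambda}\rangle=\langle\Re^2(S_j)\hat{k}_{\lambda},\hat{k}_{\lambda}\rangle+\langle\Im^2(S_j)\hat{k}_{\lambda},\hat{k}_{\lambda}\rangle,
\]
and likewise for $T_j$. Since $S_j\in\Pi_{\theta}^{\textbf{Ber},P}$, the $n=2$ case of its defining power inequalities, together with positivity of $\Re^2(S_j)$ and $\Im^2(S_j)$, gives $\langle\Re^2(S_j)\hat{k}_{\lambda},\hat{k}_{\lambda}\rangle\le\textbf{ber}(\Re^2(S_j))\le\textbf{ber}^2(\Re(S_j))$ and $\langle\Im^2(S_j)\hat{k}_{\lambda},\hat{k}_{\lambda}\rangle\le\textbf{ber}^2(\Im(S_j))$; then $\textbf{ber}(\Re(S_j))\le\textbf{ber}(S_j)$ (from \eqref{real}) and $\textbf{ber}(\Im(S_j))\le\sin\theta\,\textbf{ber}(S_j)$ (Lemma \ref{main}) yield $\|S_j\hat{k}_{\lambda}\|\,\|S_j^*\hat{k}_{\lambda}\|\le(1+\sin^2\theta)\,\textbf{ber}^2(S_j)$, and the same bound with $S_j$ replaced by $T_j$. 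Combining with the first display, $\big|\langle S_jT_j\hat{k}_{\lambda},\hat{k}_{\lambda}\rangle\big|\le(1+\sin^2\theta)\,\textbf{ber}(S_j)\,\textbf{ber}(T_j)$ for every $j$.

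Summing over $j$ and applying \eqref{gencauchy},
\begin{align*}
\Big|\Big\langle\sum_{j=1}^n S_jT_j\,\hat{k}_{\lambda},\hat{k}_{\lambda}\Big\rangle\Big| &\le\sum_{j=1}^n\big|\langle S_jT_j\hat{k}_{\lambda},\hat{k}_{\lambda}\rangle\big|\le(1+\sin^2\theta)\sum_{j=1}^n\textbf{ber}(S_j)\,\textbf{ber}(T_j)\\
&\le(1+\sin^2\theta)\Big(\sum_{j=1}^n\textbf{ber}^2(S_j)\Big)^{1/2}\Big(\sum_{j=1}^n\textbf{ber}^2(T_j)\Big)^{1/2};
\end{align*}
taking the supremum over $\lambda\in\mathscr{X}$ and squaring gives the assertion. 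I expect the only delicate point to be the symmetrization in the first paragraph: bounding $|\langle S_jT_j\hat{k}_{\lambda},\hat{k}_{\lambda}\rangle|$ directly by $\|T_j\hat{k}_{\lambda}\|\,\|S_j^*\hat{k}_{\lambda}\|$ and then using $\|T_j\hat{k}_{\lambda}\|\le\|\Re(T_j)\hat{k}_{\lambda}\|+\|\Im(T_j)\hat{k}_{\lambda}\|$ only produces the weaker constant $(1+\sin\theta)^2$ per factor, and it is the use of $S_jT_j=T_jS_j$ to suppress the mixed terms that is needed to reach $(1+\sin^2\theta)$; everything after that is routine bookkeeping.
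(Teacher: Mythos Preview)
Your proof is correct and follows essentially the same approach as the paper: both use the commutativity to symmetrize, Cauchy--Schwarz, the Cartesian identity $S^*S+SS^*=2(\Re^2(S)+\Im^2(S))$, the power property of $\Pi_{\theta}^{\textbf{Ber},P}$, Lemma~\ref{main}, and the scalar Cauchy--Schwarz inequality \eqref{gencauchy}. The only cosmetic difference is that the paper writes $2S_jT_j=S_jT_j+T_jS_j$ and then invokes \eqref{ab+cd}, whereas you symmetrize multiplicatively via $|\langle S_jT_j\hat k_\lambda,\hat k_\lambda\rangle|^2=|\langle S_jT_j\hat k_\lambda,\hat k_\lambda\rangle|\,|\langle T_jS_j\hat k_\lambda,\hat k_\lambda\rangle|$ and then use AM--GM; after that step the two arguments coincide.
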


\begin{proof}  Let $\hat{k}_{\lambda}$ be a normalized reproducing kernel of $\mathscr{H}$. Then using the inequalities (\ref{ab+cd}) and (\ref{gencauchy}), we get
    \begin{eqnarray*}&&
   \left|\left\langle \sum_{j=1}^n S_jT_j\hat{k}_{\lambda},\hat{k}_{\lambda}\right\rangle\right|^2\\
         &=& \frac{1}{4} \left|\left\langle\left(\sum_{j=1}^nS_jT_j+T_jS_j\right) \hat{k}_{\lambda},\hat{k}_{\lambda}\right\rangle\right|^2\\
         &\le &  \frac{1}{4} \left(\sum_{j=1}^n\left(\left|\langle S_jT_j \hat{k}_{\lambda},\hat{k}_{\lambda}\rangle\right|+\left|\langle T_jS_j \hat{k}_{\lambda},\hat{k}_{\lambda}\rangle\right|\right)\right)^2\\
           &\le &  \frac{1}{4} \left(\sum_{j=1}^n\left(\langle T_j^*T_j \hat{k}_{\lambda},\hat{k}_{\lambda}\rangle^{\frac{1}{2}} \langle S_jS_j^*\hat{k}_{\lambda},\hat{k}_{\lambda}\rangle^{\frac{1}{2}}+\langle S_j^*S_j \hat{k}_{\lambda},\hat{k}_{\lambda}\rangle^{\frac{1}{2}} \langle T_jT_j^* \hat{k}_{\lambda},\hat{k}_{\lambda}\rangle^{\frac{1}{2}}\right)\right)^2\\
              &\le &  \frac{1}{4}\left(\sum_{j=1}^n \left(\langle S_j^*S_j \hat{k}_{\lambda},\hat{k}_{\lambda}\rangle+\langle S_jS_j^*\hat{k}_{\lambda},\hat{k}_{\lambda}\rangle\right)^{\frac{1}{2}}\left(\langle T_j^*T_j \hat{k}_{\lambda},\hat{k}_{\lambda}\rangle + \langle T_jT_j^* \hat{k}_{\lambda},\hat{k}_{\lambda}\rangle\right)^{\frac{1}{2}}\right)^2\\
               &\le &  \frac{1}{4}\left(\sum_{i=1}^n\big\langle (S_j^*S_j +S_jS_j^*)\hat{k}_{\lambda},\hat{k}_{\lambda}\big\rangle\right)\left(\sum_{i=1}^n\big\langle( T_j^*T_j+ T_jT_j^*)\hat{k}_{\lambda},\hat{k}_{\lambda}\big\rangle \right)\\
                 &= &\left(\sum_{i=1}^n\big\langle \big(\Re^2(S_j)+\Im^2(S_j)\big)\hat{k}_{\lambda},\hat{k}_{\lambda}\big\rangle\right)\left(\sum_{i=1}^n\big\langle\big(\Re^2(T_j)+\Im^2(T_j)\big)\hat{k}_{\lambda},\hat{k}_{\lambda}\big\rangle\right)\\ 
&\le&\left(\sum_{i=1}^n\textbf{ber}\big(\Re^2(S_j)+\Im^2(S_j)\big)\right)\left(\sum_{i=1}^n\textbf{ber}\big(\Re^2(T_j)+\Im^2(T_j)\big)\right)\\ &\le& \left(\sum_{i=1}^n\Big(\textbf{ber}^2(\Re(S_j))+\textbf{ber}^2(\Im(S_j))\Big)\right)\left(\sum_{i=1}^n\Big(\textbf{ber}^2(\Re(T_j))+\textbf{ber}^2(\Im(T_j))\Big)\right)\\&\le &(1+\sin^2\theta)^2\left(\sum_{i=1}^n\textbf{ber}^2(S_j)\right)\left(\sum_{i=1}^n\textbf{ber}^2(T_j)\right)\,\,\,\mbox{\big(by Lemma \ref{main}\big)}. 
    \end{eqnarray*} 
       Now, by taking the supremum over all $\lambda\in \mathscr{X}$, we obtain the desired inequality.
By taking $n=1$
 in Theorem \ref{commutating}, we derive  the following corollary.\end{proof}
\begin{cor}  Let $S,T\in \Pi_{\theta}^{\textbf{Ber},P}$ be such that $ST=TS$. Then
  \begin{eqnarray*}
         \textbf{ber}(ST)\le (1+\sin^2\theta)\textbf{ber}(S)\textbf{ber}(T).
  \end{eqnarray*}
\end{cor}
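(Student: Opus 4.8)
The plan is to derive the corollary directly from Theorem~\ref{commutating} by specializing to the single-term case $n=1$. With $S_1=S$ and $T_1=T$ (both lying in $\Pi_{\theta}^{\textbf{Ber},P}$ and satisfying $ST=TS$), the theorem gives
\[
\textbf{ber}^2(ST)\le(1+\sin^2\theta)^2\,\textbf{ber}^2(S)\,\textbf{ber}^2(T).
\]
Taking square roots of both sides—legitimate since all quantities are nonnegative and $1+\sin^2\theta>0$—yields $\textbf{ber}(ST)\le(1+\sin^2\theta)\textbf{ber}(S)\textbf{ber}(T)$, which is exactly the claimed inequality.

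First I would invoke Theorem~\ref{commutating} with the sum reduced to a single index; the commutation hypothesis $S_jT_j=T_jS_j$ becomes just $ST=TS$, and the double product $\left(\sum\textbf{ber}^2(S_j)\right)\left(\sum\textbf{ber}^2(T_j)\right)$ collapses to $\textbf{ber}^2(S)\textbf{ber}^2(T)$. Then I would take the positive square root on each side to complete the argument.

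There is essentially no obstacle here: the entire content has been carried out in the proof of Theorem~\ref{commutating}, and this is a routine one-line specialization. The only minor point worth noting is the passage from the squared inequality to the linear one, which is immediate because $\textbf{ber}(\cdot)\ge 0$ and the constant $1+\sin^2\theta$ is positive, so monotonicity of the square root applies.
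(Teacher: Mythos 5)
Your proposal is correct and matches the paper exactly: the corollary is obtained by taking $n=1$ in Theorem~\ref{commutating} and then taking the (nonnegative) square root of both sides. Nothing further is needed.
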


\begin{remark}
    Let $S,T\in \Pi_{\theta}^{\textbf{Ber},P}$  satisfy $ST=TS$. It follows that the upper bound of $\textbf{ber}(ST)$ varies with the parameter $\theta$, and interpolates  between $\textbf{ber}(S)\textbf{ber}(T)$ and  $2\textbf{ber}(S)\textbf{ber}(T)$.  When both $S$ and $T$ are positive, we have $\textbf{ber}(ST)\le \textbf{ber}(S)\textbf{ber}(T)$ and if $\theta\to \frac{\pi^{-}}{2}$,  this bound becomes $\textbf{ber}(ST)\le 2\textbf{ber}(S)\textbf{ber}(T)$.
\end{remark}
We conclude this section with the following inequality, which follows immediately from the previous result via mathematical induction. In a weak sense, this inequality can be viewed as a power inequality for a certain class of sectorial operators.

\begin{proposition}\label{PF1}
    If $T\in \Pi_{\theta}^{\textbf{Ber},P}$  then for all $n \in \mathbb N,$
  \begin{eqnarray*}
         \textbf{ber}(T^n)\le (1+\sin^2\theta)^{n-1}\textbf{ber}^n(T).
  \end{eqnarray*}
\end{proposition}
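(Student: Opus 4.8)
The plan is to derive Proposition \ref{PF1} as a straightforward consequence of the preceding corollary (the case $S,T\in\Pi_{\theta}^{\textbf{Ber},P}$ with $ST=TS$ gives $\textbf{ber}(ST)\le(1+\sin^2\theta)\textbf{ber}(S)\textbf{ber}(T)$) by induction on $n$. The base case $n=1$ is trivial since $(1+\sin^2\theta)^0=1$. For the inductive step, assuming $\textbf{ber}(T^{n-1})\le(1+\sin^2\theta)^{n-2}\textbf{ber}^{n-1}(T)$, I would like to write $T^n=T^{n-1}\cdot T$ and apply the product corollary with $S=T^{n-1}$ and the given operator $T$; these commute, so that corollary yields $\textbf{ber}(T^n)\le(1+\sin^2\theta)\textbf{ber}(T^{n-1})\textbf{ber}(T)$, and combining with the inductive hypothesis gives exactly $(1+\sin^2\theta)^{n-1}\textbf{ber}^n(T)$.

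The main obstacle is verifying the hypotheses needed to apply the product corollary at each stage: one must know that $T^{n-1}\in\Pi_{\theta}^{\textbf{Ber},P}$, not merely $T$. This requires checking two things. First, that powers of a Berezin sectorial operator remain Berezin sectorial with the same angle — this is \emph{not} automatic in general (the Berezin range of $T^{n-1}$ need not lie in $S_\theta$ just because $\textbf{Ber}(T)\subseteq S_\theta$), so the cleanest route is to restrict attention to the situation where it does hold, or to invoke the motivating class of examples (Toeplitz operators $T_{\phi+M}$ with $\phi$ harmonic, for which $\widetilde{T^{n}}$ is controlled). Second, one must verify the power-inequality conditions on $\Re(T^{n-1})$ and $\Im(T^{n-1})$, i.e. that $T^{n-1}\in\Pi_{\theta}^{\textbf{Ber},P}$ inherits membership in the $P$-subclass. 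Given how the paper states the result without extra hypotheses, I expect the intended reading is that $\Pi_{\theta}^{\textbf{Ber},P}$ is closed under taking powers for the operators under consideration (again the harmonic-symbol Toeplitz examples of Remark \ref{Re} being the prototype), so that this closure is treated as part of the standing assumptions rather than something to be proved afresh.

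Granting that closure, the induction is mechanical. I would phrase it as: fix $T\in\Pi_{\theta}^{\textbf{Ber},P}$, note $T^{m}\in\Pi_{\theta}^{\textbf{Ber},P}$ for every $m\ge 1$ and that $T^{m}$ commutes with $T$; then for $n\ge 2$,
\[
\textbf{ber}(T^n)=\textbf{ber}(T^{n-1}\,T)\le(1+\sin^2\theta)\,\textbf{ber}(T^{n-1})\,\textbf{ber}(T)\le(1+\sin^2\theta)^{n-1}\textbf{ber}^n(T),
\]
the first inequality by the commuting-product corollary and the second by the inductive hypothesis. This closes the argument. The only genuinely delicate point, worth a sentence of remark, is that $\sin^2\theta<1$ for $\theta\in[0,\tfrac{\pi}{2})$, so the constant $(1+\sin^2\theta)^{n-1}$ sits strictly between $1$ and $2^{n-1}$; in the limiting regime $\theta\to(\pi/2)^-$ the bound degrades to $2^{n-1}\textbf{ber}^n(T)$, which is why this is only a \emph{weak} form of the power inequality $\textbf{ber}(T^n)\le\textbf{ber}^n(T)$.
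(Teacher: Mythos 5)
Your argument is essentially the paper's own proof: the paper disposes of this proposition in a single sentence, stating that it ``follows immediately from the previous result via mathematical induction,'' which is precisely the induction $\textbf{ber}(T^n)=\textbf{ber}(T^{n-1}T)\le(1+\sin^2\theta)\,\textbf{ber}(T^{n-1})\,\textbf{ber}(T)$ that you spell out. The obstruction you flag --- that applying the commuting-product corollary with $S=T^{n-1}$ requires $T^{n-1}\in\Pi_{\theta}^{\textbf{Ber},P}$ (both the sectoriality of $\textbf{Ber}(T^{n-1})$ and the power conditions on $\Re(T^{n-1})$ and $\Im(T^{n-1})$), none of which follows automatically from $T\in\Pi_{\theta}^{\textbf{Ber},P}$ --- is a genuine gap, but it is equally present and unaddressed in the paper's one-line proof, so your write-up, which at least makes the implicit closure-under-powers assumption explicit, is if anything more careful than the original.
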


\section{Additional observations }\label{S6}
Let $\mathbb{D}=\left\{ z\in \mathbb{C}:|z|<1\right\}$ and $H(\mathbb{D})$ be the space of all holomorphic functions on $\mathbb{D}$. Suppose that $H^{\infty}(\mathbb{D})$ is the space of all bounded holomorphic functions on $\mathbb{D}$. Let $f\in H(\mathbb{D})$, then the Dirichlet integral of $f$ is defined by \[\mathcal{D}(f)=\frac{1}{\pi}\int_{\mathbb{D}}|f^{\prime}(z)|^2dA(z).\]
Let $\mathcal{D}$ denote the Dirichlet space consisting of all functions $f\in H(\mathbb{D})$ such that $\mathcal{D}(f)<\infty$.
For $f,g\in\mathcal{D}$, the inner product on $\mathcal{D}$ can be defined as 
\[\langle f,g\rangle=\langle f,g\rangle_{H^2(\mathbb{D})}+\frac{1}{\pi}\int_{\mathbb{D}} f^{\prime}(z)\overline{g^{\prime}(z)}dA(z),\] where $\langle \cdot,\cdot\rangle_{H^2(\mathbb{D})}$ is the usual inner product  on Hardy-Hilbert space $H^2(\mathbb D)$. The corresponding norm is defined by 
\[\|f\|^2=\|f\|_{H^2(\mathbb{D})}^2+\frac{1}{\pi}\int_{\mathbb{D}}|f^{\prime}(z)|^2dA(z).\]
The reproducing kernel for the Dirichlet space is $k_{w}(z)=\frac{1}{z\overline{w}}\log\left (\frac{1}{1-{z\overline{w}}}\right)$, where $z\in \mathbb{D}$, $w\in \mathcal{D}\setminus\{0\}$ and $k_0=1$. Also, the normalized reproducing kernel is $\|k_w\|^2=\frac{1}{|w|^2}\log\left(\frac{1}{1-|w|^2}\right)$. For more information on the Dirichlet space, we refer to \cite{Primer}.\\
The convexity of the Berezin range and some geometric properties of the finite rank operators on Hardy-Hilbert space and Bergman space have been discussed in \cite{Augustine}. In our first result, we prove the convexity of the Berezin range of the finite rank operator of the form $T(f)=\sum_{j=1}^n\langle f,g_j\rangle g_j,$ where $f,g_j \in \mathcal{D}.$
\begin{theorem}
    Let $T(f)=\sum_{j=1}^n\langle f,g_j\rangle g_j$, where $f,g_j\in \mathcal{D}$ be a finite rank operator on $\mathcal{D}$, with $g_j(z)=\sum_{m=1}^{\infty} a_{j,m}z^m$. Then the Berezin range of $T$ is convex in $\mathbb{C}$.
\end{theorem}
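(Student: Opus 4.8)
The plan is to reduce the statement to the elementary fact that a connected subset of $\mathbb{R}$ is an interval and hence convex. The point is that $T$ is self-adjoint and positive semidefinite: for $f\in\mathcal{D}$ one has
\[
\langle Tf,f\rangle=\sum_{j=1}^{n}\bigl|\langle f,g_j\rangle\bigr|^{2}\ge 0,
\]
so by the containment $\textbf{Ber}(T)\subseteq W(T)$ recorded in the introduction, $\textbf{Ber}(T)\subseteq[0,\infty)$. It therefore suffices to show that $w\mapsto\widetilde{T}(w)$ is continuous on $\mathbb{D}$; since $\mathbb{D}$ is connected, $\textbf{Ber}(T)=\widetilde{T}(\mathbb{D})$ will be a connected subset of $\mathbb{R}$, that is, an interval, and every interval is convex.

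First I would compute the Berezin symbol explicitly. Using the reproducing property one gets $\langle k_w,g_j\rangle=\overline{g_j(w)}$, so for $w\in\mathbb{D}\setminus\{0\}$,
\[
\widetilde{T}(w)=\frac{\langle Tk_w,k_w\rangle}{\|k_w\|^{2}}=\frac{1}{\|k_w\|^{2}}\sum_{j=1}^{n}\bigl|\langle k_w,g_j\rangle\bigr|^{2}=\frac{|w|^{2}\sum_{j=1}^{n}|g_j(w)|^{2}}{\log\bigl(1/(1-|w|^{2})\bigr)},
\]
using the stated value $\|k_w\|^{2}=\frac{1}{|w|^{2}}\log\frac{1}{1-|w|^{2}}$, while $\widetilde{T}(0)=\langle Tk_0,k_0\rangle=\sum_{j=1}^{n}|g_j(0)|^{2}=0$ because $k_0\equiv 1$ and $g_j(z)=\sum_{m\ge1}a_{j,m}z^{m}$ has zero constant term. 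In particular $0\in\textbf{Ber}(T)$, so the interval obtained above has left endpoint $0$.

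It then remains to verify continuity of $\widetilde{T}$. On $\mathbb{D}\setminus\{0\}$ this is immediate from the closed formula above, since each $g_j$ is holomorphic and $w\mapsto\|k_w\|^{2}$ is continuous and strictly positive there. At the origin I would use the power-series expansion $k_w(z)=\sum_{m\ge0}\frac{(z\bar w)^{m}}{m+1}$, which gives $\|k_w-k_0\|^{2}=\sum_{m\ge1}\frac{|w|^{2m}}{m+1}\le\frac{|w|^{2}}{1-|w|^{2}}\to 0$ as $w\to0$; hence $\hat k_w\to\hat k_0$ in $\mathcal{D}$ and, $T$ being bounded, $\widetilde{T}(w)=\langle T\hat k_w,\hat k_w\rangle\to\widetilde{T}(0)$. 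Thus $\widetilde{T}$ is a continuous real-valued function on the connected set $\mathbb{D}$, so $\textbf{Ber}(T)$ is an interval of the form $[0,\textbf{ber}(T)]$ — closedness of the right end following, if one wishes, from compactness of the finite-rank operator $T$, which forces $\widetilde{T}(w)\to0$ as $|w|\to1$ — and in particular it is convex.

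The computation of $\widetilde{T}$ and continuity away from $0$ are routine; the only delicate point, and the step I would be most careful about, is the behaviour at $w=0$, where the reproducing kernel is given by the separate value $k_0\equiv1$ rather than by the logarithmic formula, handled via the power-series representation of $k_w$ above. I would also remark that the hypothesis $a_{j,0}=0$ is used only to place the left endpoint of the interval at $0$ and plays no role in the convexity itself, which holds for any finite-rank positive operator of this form on $\mathcal{D}$.
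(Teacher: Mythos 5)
Your proposal is correct and takes essentially the same route as the paper's proof: compute the Berezin symbol explicitly, note that it is a continuous real-valued function on the connected set $\mathbb{D}$, so $\textbf{Ber}(T)$ is a connected subset of $\mathbb{R}$, i.e.\ an interval, hence convex. Your additional checks (positivity of $T$, continuity at $w=0$ via the power-series form of $k_w$, and the identification of the endpoints) only supply details the paper leaves implicit and do not change the argument.
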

\begin{proof}
    Let $\lambda\in \mathbb{D}$. Then 
    \begin{eqnarray*}
Tk_{\lambda}=\sum_{j=1}^{n} \langle k_{\lambda},g_j\rangle g_j
=\sum_{j=1}^{n}\overline{g_j(\lambda)}g_j.  
    \end{eqnarray*}
    The Berezin transform of $T$ at $\lambda\in \mathbb{D}\setminus\{0\}$ is
      \begin{eqnarray*}
        \widetilde{T}(\lambda)=\langle T \hat{k}_{\lambda},\hat{k}_{\lambda}\rangle
        &=& |\lambda|^2\frac{1}{\log(\frac{1}{1-|\lambda|^2})}\langle Tk_{\lambda},k_{\lambda}\rangle\\
         &=& |\lambda|^2\frac{1}{\log(\frac{1}{1-|\lambda|^2})}\left\langle \sum_{j=1}^{n}\overline{g_j(\lambda)}g_j,k_{\lambda}\right\rangle\\
          &=& |\lambda|^2\frac{1}{\log(\frac{1}{1-|\lambda|^2})}\sum_{j=1}^{n}|g_j(\lambda)|^2.
    \end{eqnarray*} 
    For $\lambda=0$, $Tk_{\lambda}=\sum_{j=1}^n\overline{g_j(0)}g_j$. Therefore $\widetilde{T}(\lambda)=0$. 
    As $g_j\in\mathcal{D}$, so $g_j$ is a holomorphic function. Therefore $\widetilde{T}(\lambda)$ is a real continuous function on $\mathbb{D}$. Since $\mathbb{D}$ is a connected set, then the image of $\mathbb{D}$ under $\widetilde{T}$ is a connected set in $\mathbb{R}$. In $\mathbb{R}$, the connected sets are exactly the intervals (including $\mathbb R$ itself) and singleton sets.
    Hence, $\textbf{Ber}(T)$ is convex in $\mathbb{C}$.
\end{proof}
Now, by using the Berezin transformation formula, we establish a geometric characterization of the Berezin range for the finite-rank operator on $\mathcal{D}$.
\begin{theorem}
    Let $T(f)=\sum_{j=1}^{n}\langle f,g_j\rangle h_j$, where $f,g_j,h_j\in\mathcal{D}$ be a finite rank operator on $\mathcal{D}$ with $g_j(z)=\sum_{m=1}^{\infty}a_{j,m}z^{m}$ and $h_j(z)=\sum_{m=1}^{\infty}b_{j,m}z^{m}$. If the coefficients of $g_j$ and $h_j$ are real, then $\textbf{Ber}(T)$ is symmetric about the real axis. 
\end{theorem}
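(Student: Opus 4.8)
The plan is to write down the Berezin transform of $T$ explicitly at an arbitrary point $\lambda \in \mathbb{D}\setminus\{0\}$ and show that $\widetilde{T}(\bar\lambda)=\overline{\widetilde{T}(\lambda)}$; this forces $\mathbf{Ber}(T)$ to be closed under complex conjugation, which is precisely symmetry about the real axis. First I would compute $Tk_\lambda = \sum_{j=1}^n \langle k_\lambda, g_j\rangle h_j = \sum_{j=1}^n \overline{g_j(\lambda)}\, h_j$, using the reproducing property $\langle k_\lambda, g_j\rangle = \overline{g_j(\lambda)}$. Pairing with $k_\lambda$ and dividing by $\|k_\lambda\|^2 = \frac{1}{|\lambda|^2}\log\!\big(\frac{1}{1-|\lambda|^2}\big)$ gives
\[
\widetilde{T}(\lambda) = \frac{|\lambda|^2}{\log\!\big(\tfrac{1}{1-|\lambda|^2}\big)} \sum_{j=1}^{n} \overline{g_j(\lambda)}\, h_j(\lambda).
\]
For $\lambda = 0$ one checks directly that $\widetilde{T}(0) = \sum_j \overline{g_j(0)} h_j(0) = 0$ (since $g_j, h_j$ have no constant term), which is real and hence unproblematic.

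Next I would exploit the hypothesis that the Taylor coefficients $a_{j,m}$ of $g_j$ and $b_{j,m}$ of $h_j$ are real. This means $g_j(\bar\lambda) = \overline{g_j(\lambda)}$ and $h_j(\bar\lambda) = \overline{h_j(\lambda)}$ for every $\lambda$, since $g_j(\bar\lambda) = \sum_m a_{j,m}\bar\lambda^m = \overline{\sum_m a_{j,m}\lambda^m} = \overline{g_j(\lambda)}$, and similarly for $h_j$. The scalar prefactor depends only on $|\lambda|^2$, and $|\bar\lambda|^2 = |\lambda|^2$, so it is unchanged when $\lambda$ is replaced by $\bar\lambda$. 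Therefore
\[
\widetilde{T}(\bar\lambda) = \frac{|\lambda|^2}{\log\!\big(\tfrac{1}{1-|\lambda|^2}\big)} \sum_{j=1}^{n} \overline{g_j(\bar\lambda)}\, h_j(\bar\lambda) = \frac{|\lambda|^2}{\log\!\big(\tfrac{1}{1-|\lambda|^2}\big)} \sum_{j=1}^{n} g_j(\lambda)\, \overline{h_j(\lambda)} = \overline{\widetilde{T}(\lambda)}.
\]

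Finally, since $\mathbb{D}$ is itself symmetric about the real axis (invariant under $\lambda \mapsto \bar\lambda$), the identity $\widetilde{T}(\bar\lambda) = \overline{\widetilde{T}(\lambda)}$ shows that for every value $w \in \mathbf{Ber}(T)$, the conjugate $\bar w$ also lies in $\mathbf{Ber}(T)$; that is, $\mathbf{Ber}(T) = \overline{\mathbf{Ber}(T)}$, which is the asserted symmetry about the real axis. I do not anticipate a genuine obstacle here: the argument is a direct computation combined with the elementary observation that real Taylor coefficients translate into the Schwarz-reflection symmetry $g_j(\bar\lambda)=\overline{g_j(\lambda)}$. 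The only point requiring a moment of care is handling $\lambda = 0$ separately, because the reproducing kernel formula for the Dirichlet space is piecewise (with $k_0 = 1$), but there $\widetilde{T}(0)$ is real so symmetry is immediate.
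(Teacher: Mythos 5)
Your proposal is correct and follows essentially the same route as the paper: compute $\widetilde{T}(\lambda)$ via the reproducing property, use the real Taylor coefficients to get $g_j(\bar\lambda)=\overline{g_j(\lambda)}$ and $h_j(\bar\lambda)=\overline{h_j(\lambda)}$, and conclude $\widetilde{T}(\bar\lambda)=\overline{\widetilde{T}(\lambda)}$ (the paper phrases this in polar coordinates $\lambda=re^{i\theta}$, but the argument is identical). Your separate treatment of $\lambda=0$ also matches the paper's.
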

\begin{proof}
     Let $Tf=\sum_{j=1}^{n}\langle f,g_j\rangle h_j$. Then \[Tk_{\lambda}=\sum_{j=1}^n\langle k_{\lambda}, g_j\rangle h_j= \sum_{j=1}^n\overline{g_j(\lambda)}h_j.\]
    The Berezin transform of $T$ at $\lambda\in \mathbb{D}\setminus\{0\}$ is 
       \begin{eqnarray*}
        \widetilde{T}(\lambda)=\langle T \hat{k}_{\lambda},\hat{k}_{\lambda}\rangle
        &=& |\lambda|^2\frac{1}{\log(\frac{1}{1-|\lambda|^2})}\langle Tk_{\lambda},k_{\lambda}\rangle\\
         &=& |\lambda|^2\frac{1}{\log(\frac{1}{1-|\lambda|^2})}\left\langle \sum_{j=1}^{n}\overline{g_j(\lambda)}h_j,k_{\lambda}\right\rangle\\
          &=& |\lambda|^2\frac{1}{\log(\frac{1}{1-|\lambda|^2})}\sum_{j=1}^{n}\overline{g_j(\lambda)}h_j(\lambda).
    \end{eqnarray*} 
     For $\lambda=0$, $\widetilde{T}(\lambda)=0$. Putting $\lambda=re^{i\theta}$, so
     \begin{eqnarray*}
         \widetilde{T}(re^{i\theta})=r^2\frac{1}{\log\left(\frac{1}{1-r^2}\right)}\sum_{j=1}^n\left(\overline{\left(\sum_{m=1}^{\infty} a_{j,m}(re^{i\theta})^m\right)}\left(\sum_{m=1}^{\infty} b_{j,m}(re^{i\theta})^m\right)\right)
     \end{eqnarray*}
     Now, we compute $\overline{\widetilde{T}(re^{-i\theta}})$. Since $a_{j,m}$ and $b_{j,m}\in\mathbb{R}$, we have 
     \begin{eqnarray*}
         \overline{\widetilde{T}(re^{-i\theta}}) &=& \overline{r^2\frac{1}{\log\left(\frac{1}{1-r^2}\right)}\sum_{j=1}^n\left(\overline{\left(\sum_{m=1}^{\infty} a_{j,m}(re^{-i\theta})^m\right)}\left(\sum_{m=1}^{\infty} b_{j,m}(re^{-i\theta})^m\right)\right)}\\
         &=& r^2\frac{1}{\log\left(\frac{1}{1-r^2}\right)}\sum_{j=1}^n\left(\left(\sum_{m=1}^{\infty} a_{j,m}(re^{-i\theta})^m\right)\overline{\left(\sum_{m=1}^{\infty} b_{j,m}(re^{-i\theta})^m\right)}\right)\\
            &=& r^2\frac{1}{\log\left(\frac{1}{1-r^2}\right)}\sum_{j=1}^n\left(\overline{\left(\sum_{m=1}^{\infty} a_{j,m}(re^{i\theta})^m\right)}\left(\sum_{m=1}^{\infty} b_{j,m}(re^{i\theta})^m\right)\right)\\
            &=& \widetilde{T}(re^{i\theta}).
     \end{eqnarray*}
   Therefore,  $\textbf{Ber}(T)$ is closed under complex conjugation and so symmetric about the real axis. 
\end{proof}

The Berezin range of some weighted shift operators on the Hardy space and Bergman space is studied in \cite{Bulletin des}. Here, we are concerned about the Berezin range of weighted shift operators on the Dirichlet space.\\
Let $f(z)=\sum^{\infty}_{n=0}a_nz^n$ be an element of the Dirichlet space $\mathcal{D}$. The weighted shift operator on $\mathcal{D}$ is defined as $$T\left(\sum_{n=0}^{\infty}a_nz^n\right)=\sum_{n=0}^{\infty}a_n\beta_{n+1}z^{n+1},$$
where the weights $\{\beta_n\}$ is a bounded sequence in $\mathbb{C}.$ Since $\{\beta_n\}$ is bounded, so there exists $K\in\mathbb{R}$ such that $|\beta_n| \leq K,~~ n\in \mathbb N.$ Therefore, we obtain
  $$\sum_{n=0}^{\infty}(n+1)|a_n\beta_{n+1}|^2\leq K\sum_{n=0}^{\infty}(n+1)|a_n|^2<\infty.$$ 
  Thus $T$ is a bounded linear operator on $\mathcal{D}.$ In the next result, we consider some particular sequences and study properties such as the symmetry of the Berezin range of 
$T$ with respect to the real and imaginary axes. Moreover, we identify a particular weight for which the Berezin range is a disk centered at the origin with a specified radius.
\begin{theorem}
    Let $T$ be the weighted shift operator on $\mathcal{D}$. \\
   (i) If $\{\beta_n\}$ is a real bounded sequence, then $\textbf{Ber}(T)$ is symmetric about the real axis. \\
   (ii) If $\{\beta_n\}$ is a purely imaginary bounded sequence, then $\textbf{Ber}(T)$ is symmetric about the imaginary axis. \\
   (iii)  If $\{\beta_n=\frac{c}{n}\}$ with $c\in\mathbb{D}$ then the Berezin range of $T$ is a disk centred at origin and radius $|c|.$
\end{theorem}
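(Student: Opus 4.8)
The plan is to compute the Berezin transform $\widetilde{T}(\lambda)$ of the weighted shift operator explicitly and then analyze its symmetry and range in each of the three cases. First I would fix $\lambda\in\mathbb{D}\setminus\{0\}$ and expand the reproducing kernel $k_\lambda(z)=\frac{1}{z\overline{\lambda}}\log\bigl(\frac{1}{1-z\overline{\lambda}}\bigr)=\sum_{n=0}^{\infty}\frac{\overline{\lambda}^{\,n}}{n+1}z^{n}$ in its power series, so that the Dirichlet-norm coefficients of $k_\lambda$ are $\frac{\overline{\lambda}^{\,n}}{n+1}$. Applying $T$ termwise gives $Tk_\lambda=\sum_{n=0}^{\infty}\frac{\overline{\lambda}^{\,n}}{n+1}\beta_{n+1}z^{n+1}$, and then pairing with $k_\lambda$ in the Dirichlet inner product (which weights the $z^{m}$ coefficient by $m$ for $m\ge 1$, i.e. $\langle z^m,z^m\rangle=m+1$ with the convention matching the kernel above) yields a series of the form $\langle Tk_\lambda,k_\lambda\rangle=\sum_{n=0}^{\infty} c_n\,\beta_{n+1}\,\overline{\lambda}^{\,n}\lambda^{\,n+1}$ for explicit positive rationals $c_n$. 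Dividing by $\|k_\lambda\|^2=\frac{1}{|\lambda|^2}\log\bigl(\frac{1}{1-|\lambda|^2}\bigr)$ produces
\[
\widetilde{T}(\lambda)=\frac{|\lambda|^2}{\log\!\bigl(\frac{1}{1-|\lambda|^2}\bigr)}\sum_{n=0}^{\infty} c_n\,\beta_{n+1}\,|\lambda|^{2n}\,\lambda ,
\]
so that $\widetilde{T}(\lambda)=\lambda\cdot g(|\lambda|^2)$ where $g$ is a real-analytic scalar function determined by the weights (real-valued when the $\beta_n$ are real, purely imaginary when the $\beta_n$ are). The main bookkeeping obstacle is getting the constants $c_n$ exactly right from the Dirichlet pairing of the logarithmic kernel; once the factorized form $\widetilde{T}(\lambda)=\lambda\,g(|\lambda|^2)$ is in hand, the three parts follow quickly.

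For part (i): if the $\beta_n$ are real then $g(|\lambda|^2)\in\mathbb{R}$, so replacing $\lambda$ by $\overline{\lambda}$ replaces $\widetilde{T}(\lambda)$ by $\overline{\lambda}\,g(|\lambda|^2)=\overline{\widetilde{T}(\lambda)}$; hence $\textbf{Ber}(T)$ is closed under complex conjugation, i.e. symmetric about the real axis. For part (ii): if the $\beta_n$ are purely imaginary, write $\beta_n=i\gamma_n$ with $\gamma_n\in\mathbb{R}$; then $g(|\lambda|^2)=i\,h(|\lambda|^2)$ with $h$ real, so $\widetilde{T}(\lambda)=i\lambda\,h(|\lambda|^2)$. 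Replacing $\lambda$ by $-\overline{\lambda}$ gives $-i\overline{\lambda}\,h(|\lambda|^2)=\overline{\,i\lambda\,h(|\lambda|^2)\,}\cdot(-1)\cdot\overline{(-1)}$; more directly, note the map $w\mapsto -\overline{w}$ is reflection across the imaginary axis, and $\widetilde{T}(-\overline{\lambda})=-i\overline{\lambda}\,h(|\lambda|^2)=-\overline{\widetilde{T}(\lambda)}=$ the reflection of $\widetilde{T}(\lambda)$ across the imaginary axis, so $\textbf{Ber}(T)$ is symmetric about the imaginary axis.

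For part (iii): substitute $\beta_{n+1}=\frac{c}{n+1}$ and compute. The plan is to show that the particular weight $\beta_n=c/n$ makes the series collapse: after the substitution the coefficients should telescope so that $\sum_{n=0}^{\infty} c_n\,\frac{c}{n+1}\,|\lambda|^{2n}$ equals precisely $\frac{c}{|\lambda|^2}\log\bigl(\frac{1}{1-|\lambda|^2}\bigr)$, which exactly cancels the normalizing factor $\|k_\lambda\|^{-2}$ and leaves $\widetilde{T}(\lambda)=c\,\lambda$. Then as $\lambda$ ranges over $\mathbb{D}$, $\widetilde{T}(\lambda)=c\lambda$ ranges over the disk $B(0,|c|)$, and since $\widetilde{T}(0)=0$ lies in this disk, $\textbf{Ber}(T)=B(0,|c|)$ (open disk of radius $|c|$ centered at the origin), as claimed. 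The one place needing care is verifying the telescoping identity for the coefficients $c_n$; this is where an error in the constants from the first step would surface, so I would double-check the Dirichlet pairing by testing it against the known value $\langle k_\lambda,k_\lambda\rangle$ before specializing the weights.
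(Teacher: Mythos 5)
Your route is the same as the paper's: expand $k_\lambda$, compute $\widetilde{T}(\lambda)$ explicitly, factor it as $\lambda\,g(|\lambda|^2)$ with $g$ real (resp.\ purely imaginary) when the weights are, and read off the symmetries; part (i) matches the paper's argument verbatim. In part (ii), though, your verification has a sign slip: with $\widetilde{T}(\lambda)=i\lambda h(|\lambda|^2)$ and $h$ real, one has $\widetilde{T}(-\bar{\lambda})=-i\bar{\lambda}h(|\lambda|^2)=\overline{\widetilde{T}(\lambda)}$, not $-\overline{\widetilde{T}(\lambda)}$, so what you actually checked is closure under conjugation (real-axis symmetry). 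The reflection that works is $\lambda\mapsto\bar{\lambda}$, giving $\widetilde{T}(\bar{\lambda})=i\bar{\lambda}h(|\lambda|^2)=-\overline{\widetilde{T}(\lambda)}$, which is exactly the identity the paper uses; alternatively, since $\widetilde{T}(\lambda)=\lambda g(|\lambda|^2)$ with a radial factor, the Berezin range is a union of circles centred at the origin and both symmetries are immediate. This is a fixable slip, not a structural problem.

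The genuine gap is in part (iii), and it sits exactly where you flagged it. With the constants you yourself set up (kernel coefficients $\bar{\lambda}^n/(n+1)$ and $\langle z^m,z^m\rangle=m+1$), the pairing gives $\langle Tk_\lambda,k_\lambda\rangle=\lambda\sum_{n\ge 0}\frac{|\lambda|^{2n}}{n+1}\beta_{n+1}$, i.e.\ $c_n=\frac{1}{n+1}$. Substituting $\beta_{n+1}=c/(n+1)$ then yields $c\lambda\sum_{n\ge 0}\frac{|\lambda|^{2n}}{(n+1)^2}$, a dilogarithm-type sum, which is \emph{not} equal to $\frac{c\lambda}{|\lambda|^2}\log\frac{1}{1-|\lambda|^2}$; so the exact cancellation against $\|k_\lambda\|^2$ needed for $\widetilde{T}(\lambda)=c\lambda$ does not occur, and your proposed sanity check against $\|k_\lambda\|^2$ would in fact certify precisely the constants for which the telescoping fails. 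The collapse to $\widetilde{T}(\lambda)=c\lambda$ requires the formula the paper works with, namely $\langle Tk_\lambda,k_\lambda\rangle=\lambda\sum_{n\ge 0}|\lambda|^{2n}\beta_{n+1}$ with no $\frac{1}{n+1}$ factor, which the paper obtains by expanding $k_\lambda$ with $1/\sqrt{n+1}$ coefficients and pairing the monomials as if they were orthonormal. So the key identity in your part (iii) is not merely unverified: under your own (correct) normalization it is false, and you must either rework the bookkeeping to land on the paper's formula or accept that $\widetilde{T}$ carries a nonconstant radial factor, in which case the image is a smaller closed disk rather than the disk of radius $|c|$. Until this discrepancy is resolved, part (iii) is not proved by your proposal.
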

\begin{proof}
    The Berezin transform of $T$ at $\lambda\in\mathbb{D}\setminus\{0\}$ is 
  \begin{eqnarray}
      \widetilde{T}(\lambda)&=&\langle T\hat{k_\lambda},\hat{k_\lambda}\rangle\nonumber\\&=&|\lambda|^2\frac{1}{\log\left(\frac{1}{1-|\lambda|^2}\right)}\langle Tk_\lambda,k_\lambda\rangle\nonumber\\&=&|\lambda|^2\frac{1}{\log\left(\frac{1}{1-|\lambda|^2}\right)}\left\langle\sum_{n=1}^{\infty}\frac{\overline{\lambda}^n}{\sqrt{n+1}}\beta_{n+1}z^{n+1},\sum_{n=1}^{\infty}\frac{\overline{\lambda}^n}{\sqrt{n+1}}z^n\right\rangle\nonumber\\&=&|\lambda|^2\frac{1}{\log\left(\frac{1}{1-|\lambda|^2}\right)}\lambda\left(\sum_{n=1}^{\infty}|\lambda|^{2n}\beta_{n+1}\right)\label{ttrree}.     
  \end{eqnarray}
  For $\lambda=0,$  $\widetilde{T}(0)=0.$\\
    $(i)$ Suppose that $\{\beta_n\}$ is a real bounded sequence. Then from the equality (\ref{ttrree}), we obtain that
    \begin{align*}
        \overline{\widetilde{T}(\lambda)}&=|\lambda|^2\frac{1}{\log\left(\frac{1}{1-|\lambda|^2}\right)}\overline{\lambda}\left(\sum_{n=1}^{\infty}|\lambda|^{2n}\beta_{n+1}\right)\\&=\widetilde{T}(\overline{\lambda}).
    \end{align*}
    Hence, $\textbf{Ber}(T)$ is symmetric about the real axis.\\
    $(ii)$ Suppose that $\{\beta_n\}$ is a purely complex bounded sequence, then from the equality \eqref{ttrree}, we get 
    \begin{align*}
          \overline{\widetilde{T}(\lambda)}&=-|\lambda|^2\frac{1}{\log\left(\frac{1}{1-|\lambda|^2}\right)}\overline{\lambda}\left(\sum_{n=1}^{\infty}|\lambda|^{2n}\beta_{n+1}\right)\\&=-\widetilde{T}(\overline{\lambda}).
    \end{align*}
    This implies that $\textbf{Ber}(T)$ is symmetric about the imaginary axis. \\
    $(iii)$ Suppose that $\beta_n=\frac{c}{n}$ for all $n \in \mathbb N,$ then from the relation (\ref{ttrree}), we get
    \begin{align*}
        \widetilde{T}(\lambda)&=|\lambda|^2\frac{1}{\log\left(\frac{1}{1-|\lambda|^2}\right)}\lambda\left(\sum_{n=1}^{\infty}|\lambda|^{2n}\frac{c}{n+1}\right)\\&=c\lambda, ~~~~\text{for $\lambda\in\mathbb{D}\setminus\{0\}$.}
    \end{align*}
    Again we have $\widetilde{T}(0)=0.$\\
    Therefore, $\textbf{Ber}(T)=\{z\in\mathbb{C}:|z|<|c|\},$ as desired.\\
\end{proof}

The results of this section indicate that the properties of the Berezin range of these operators on the Dirichlet space are largely similar to those observed in the Hardy-Hilbert space. For the case of $H^2(\mathbb D),$ as discussed in Section \ref{S3} it is possible to construct Berezin sectorial operators and(or) sectorial operators by applying certain shifts to composition-differentiation operators.  This naturally raises the question of whether similar constructions can be carried out for composition-differentiation operators on the Dirichlet space.
In this regard, \cite{AP} studied composition–differentiation operators on the Dirichlet space and determined the norms of operators induced by monomial symbols.

An important observation is that if $\widetilde{D_{\phi}}(re^{i\theta})=f(r)e^{i\theta}$ then the real-analytic property of the Berezin transform implies that whenever $\lim_{r \to 1^-}f(r)$ exists, the Berezin range of $D_{\phi}$ is a closed disk centered at the origin with radius 
$$r_0=\max_{r \in [0,1]}f(r).$$ In particular, if $r_0$ is strictly smaller than the numerical radius of $D_{\phi}$, this construction can be realized.

\section{Conclusion}

This paper introduces Berezin sectorial operators and demonstrates that they form a strictly broader class than classical sectorial operators. Our examples and inequalities highlight the effectiveness of the Berezin sectorial framework in refining Berezin number estimates and understanding the geometry of the Berezin range. These results highlight that the inequalities obtained for Berezin sectorial operators cannot be achieved by standard sectorial operators alone. While the Berezin range of several operators on the Dirichlet space exhibits properties analogous to those in the Hardy-Hilbert space, it remains an open problem whether composition-differentiation operators can be systematically constructed to be Berezin sectorial but not sectorial or both Berezin sectorial and sectorial but with different indices. Addressing this question may lead to further insights into the interaction between Berezin range geometry and Berezin number inequalities.


\section*{Declarations}	
\textit{Acknowledgements.} This work originated during the research visit of Dr. Anirban Sen to the Department of Mathematics at Jadavpur University, Kolkata, India. He gratefully acknowledges support from the Czech Science Foundation (GA CR, Grant No. 25-18042S).
Mr. Saikat Mahapatra thanks the UGC, Govt.\ of India, for financial support in the form of Fellowship. Miss Sweta Mukherjee is supported by the State Government Departmental Fellowship, Govt.\ of West Bengal.\\
\textit{Author Contributions:} All the authors contributed equally to this manuscript and reviewed it. \\
 \textit{Data Availability :} No datasets were generated or analysed during the current study. \\
\textit{Conflict of interest:} There is no competing interest.\\

\bibliographystyle{amsplain}

\end{document}